% This is samplepaper.tex, a sample chapter demonstrating the
% LLNCS macro package for Springer Computer Science proceedings;
% Version 2.20 of 2017/10/04
%
\documentclass[runningheads]{llncs}
\usepackage{graphicx}
% Used for displaying a sample figure. If possible, figure files should
% be included in EPS format.
%
% If you use the hyperref package, please uncomment the following line
% to display URLs in blue roman font according to Springer's eBook style:
% \renewcommand\UrlFont{\color{blue}\rmfamily}
\usepackage{longtable}
\usepackage{wrapfig}
\usepackage{ucs}
\usepackage{dsfont}
\usepackage{mathrsfs}
\usepackage{mathtools}
\usepackage{amsmath}
\usepackage{amsfonts}
\usepackage{amssymb}
\usepackage{soul}
\usepackage[makeroom]{cancel}
\usepackage{bbm}
\usepackage[english]{babel}
\usepackage{ucs}
\usepackage[colorlinks=true,linkcolor=blue,citecolor=blue,urlcolor=orange]{hyperref}
\usepackage{longtable}
\usepackage{wasysym}
\usepackage{verbatim}
\usepackage{multirow}
\usepackage{bussproofs}
\usepackage{setspace}
\usepackage{graphicx}
\usepackage{stmaryrd}
\usepackage{forest}
\forestset{smullyan tableaux/.style={for tree={math content},where n children=1{!1.before computing xy={l=\baselineskip},!1.no edge}{},closed/.style={label=below:$\times$},},}
\usepackage{longtable}
\usepackage[all]{xy}
\usepackage{wrapfig}
\usepackage{xcolor}
\usepackage[colorinlistoftodos,prependcaption,textsize=small]{todonotes}
\usepackage[most]{tcolorbox}
\usepackage{enumitem}
\usepackage{tikz}

\newcommand{\commentSabine}[1]{}

\newcommand{\Prop}{\mathtt{Prop}}
\newcommand{\coimplies}{\Yleft}
\newcommand{\BD}{\mathsf{BD}}
\newcommand{\pspace}{\mathsf{PSPACE}}
\newcommand{\biG}{\mathsf{biG}}
\newcommand{\Gsquare}{\mathsf{G}^2}
\newcommand{\KGsquare}{\mathbf{K}\mathsf{G}^2}
\newcommand{\fbbirelKGsquare}{\mathbf{K}\mathsf{G}^{2\pm}_{\mathsf{fb}}}
\newcommand{\KG}{\mathfrak{GK}}
\newcommand{\KbiG}{\mathbf{K}\mathsf{biG}}
\newcommand{\fbKGsquare}{\mathbf{K}\mathsf{G}^2_{\mathsf{fb}}}

\newcommand{\birelKGsquare}{\mathbf{K}\mathsf{G}^{2\pm}}
\newcommand{\bimodalLsquare}{\mathsf{bi}\mathcal{L}^\neg_{\Box,\lozenge}}

\newcommand{\Luk}{{\mathchoice{\mbox{\rm\L}}{\mbox{\rm\L}}{\mbox{\rm\scriptsize\L}}{\mbox{\rm\tiny\L}}}}

\newcommand{\Str}{\mathsf{Str}}
\newcommand{\AStr}{\mathsf{AStr}}
\newcommand{\str}{\mathsf{str}}
\newcommand{\real}{\mathsf{rl}}

\usepackage{comment}
%\includecomment{comment}

\begin{document}

% -- spacing in align environment
% between lines
\setlength{\jot}{0pt} 
% before and after
\setlength{\abovedisplayskip}{2pt}
\setlength{\belowdisplayskip}{2pt}
\setlength{\abovedisplayshortskip}{1pt}
\setlength{\belowdisplayshortskip}{1pt}

\title{Paraconsistent G\"{o}del modal logic\\on bi-relational frames\thanks{The research of Marta B\'ilkov\'a was supported by the grant 22-01137S of the Czech Science Foundation. The research of Sabine Frittella and Daniil Kozhemiachenko was funded by the grant ANR JCJC 2019, project PRELAP (ANR-19-CE48-0006). This research is part of the MOSAIC project financed by the European Union's Marie Sk\l{}odowska-Curie grant No.~101007627.}}
\titlerunning{Paraconsistent bi-relational G\"{o}del logic}
% If the paper title is too long for the running head, you can set
% an abbreviated paper title here
\author{Marta B\'ilkov\'a\inst{1}\orcidID{0000-0002-3490-2083} \and Sabine Frittella\inst{2}\orcidID{0000-0003-4736-8614}\and Daniil Kozhemiachenko\inst{2}\orcidID{0000-0002-1533-8034}}
\authorrunning{B\'ilkov\'a et al.}
% First names are abbreviated in the running head.
% If there are more than two authors, 'et al.' is used.
\institute{The Czech Academy of Sciences, Institute of Computer Science, Prague\\
\email{bilkova@cs.cas.cz}
\and
INSA Centre Val de Loire, Univ.\ Orl\'{e}ans, LIFO EA 4022, France\\
\email{sabine.frittella@insa-cvl.fr, daniil.kozhemiachenko@insa-cvl.fr}}
\maketitle              % typeset the header of the contribution
\begin{abstract}
We further develop the paraconsistent G\"{o}del modal logic $\KGsquare$. In this paper, we consider its version $\birelKGsquare$ endowed with Kripke semantics on $[0,1]$-valued frames with two fuzzy relations $R^+$ and $R^-$ (degrees of trust in assertions and denials) and two valuations $v_1$ and $v_2$ (support of truth and support of falsity) linked with a De Morgan negation $\neg$.

We demonstrate that in contrast to $\KGsquare$, $\birelKGsquare$ \emph{does not} extend G\"{o}del modal logic $\KG$ and that $\Box$ and $\lozenge$ are not interdefinable. We also show that several important classes of frames are $\birelKGsquare$ definable (in particular, crisp, mono-relational, and finitely branching). For $\birelKGsquare$ over finitely branching frames, we create a sound and complete constraint tableaux calculus and a decision procedure based upon it. Using the decision procedure we show that $\birelKGsquare$ satisfiability and validity are in $\mathsf{PSPACE}$.
\keywords{modal logic \and G\"{o}del logic \and paraconsistent logic \and constraint tableaux}
\end{abstract}
\section{Introduction\label{sec:introduction}}
Paraconsistent logics are characterised by the rejection of the \emph{explosion} of entailment, i.e., $p,\neg p\models q$ (or, alternatively, not considering $p\wedge\neg p\rightarrow q$ to be valid). The motivation is very simple: even if we have contradictory premises, it is counterintuitive to think that literally every statement should follow from them.

Likewise, in modal logics, when interpreting $\Box$ or $\lozenge$ as, e.g., beliefs or obligations, the validity of $\Box(p\wedge\neg p)\rightarrow\Box q$ and $\lozenge(p\wedge\neg p)\rightarrow\lozenge q$ do not correspond to our notions of beliefs and obligations. Indeed, a~person can believe in one contradiction but not in another one. I can have conflicting moral obligations but I am not obliged to do absolutely everything. Thus, paraconsistent modal logics are more intuitive than the classical ones when formalising such notions.

\vspace{.5em}

\textbf{Paraconsistent modal logics}
One of the traditional approaches to the semantical representation of paraconsistency is to consider Kripke frames not with one but two valuations~\cite{Priest2008FromIftoIs,Wansing2005,Goble2006,Sherkhonov2008,OdintsovWansing2010,OdintsovWansing2017,Drobyshevich2020}. These are interpreted as independent supports of truth and falsity (or positive and negative supports). The idea follows Belnap's and Dunn's ‘useful four-valued logic’~\cite{Belnap2019} (alias, $\BD$ or $\mathsf{FDE}$ --- ‘first-degree entailment’).

An expected next step after introducing separate valuations of formulas' truth and falsity is to introduce separate accessibility relations as it is done in~\cite{Sherkhonov2008,Drobyshevich2020}: one relation is used to determine whether a modal formula \emph{is true at $w$}, and the other whether it is \emph{false at $w$}.

\vspace{.5em}

\textbf{Fuzzy modal logics}
In the above-mentioned cases, a formula can have only one of the four ‘Belnapian values’ at every given state: it is either \emph{true only}, \emph{false only}, \emph{both true and false}, or \emph{neither true nor false}. It is clear, however, that some statements have multiple \emph{truth degrees} ranging from $0$ to $1$: e.g., if $s$ is ‘the apple is sour’, I can sometimes \emph{partially agree} with $s$. This is the idea behind fuzzy logics.

This idea also makes sense when considering modal statements as well: I can have obligations of different strengths or I can believe in one proposition more than in the other. Thus, we have deontic, doxastic, epistemic fuzzy logics (cf., e.g.,~\cite{DellundeGodo2008} and~\cite{DankovaBehounek2020}).

Different (propositional) fuzzy logics have different expressive capacities. One can roughly divide them into three classes: the ones that can express (truncated) addition and subtraction; the ones that can express order on $[0,1]$; those that can do neither of those. The most well-known examples are, respectively, \L{}ukasiewicz, G\"{o}del, and Product logic (cf., e.g.,~\cite{Hajek1998} or~\cite{MetcalfeOlivettiGabbay2008} for further details).

When formalising natural-language modal statements, it is reasonable to expect that an agent can compare them (e.g., ‘I think that the rain today is more likely than a tornado’). On the other hand, it is rare to see somebody who says ‘I am 67\% certain that Paula's dog is a golden retriever’ while ‘I think that Paula's dog is rather a golden retriever than a dachshund’ is a completely natural sentence. Thus, the logics of the second kind seem to be the most reasonable choice.

\vspace{.5em}

\textbf{G\"{o}del modal logics and their paraconsistent expansions} (Propositional) G\"{o}del logic $\mathsf{G}$ can be thought of as a logic of comparative truth since the value of a formula depends not on the values but rather on the order of the variables. Thus, it is well-suited to the formalisation of modal statements. The expansion of $\mathsf{G}$ with $\Box$ and $\lozenge$ ($\KG$ in the notation of~\cite{RodriguezVidal2021}) with semantics on $[0,1]$-valued frames with fuzzy accessibility relations was first introduced in~\cite{CaicedoRodriguez2010} and since then well studied. In particular, axiomatisations of both $\Box$ and $\lozenge$ fragments and the axiomatisations of bi-modal fuzzy~\cite{CaicedoRodriguez2015} and crisp~\cite{RodriguezVidal2021} logics are known. Moreover, both fuzzy and crisp $\KG$, and its monomodal fragments are $\pspace$-complete~\cite{MetcalfeOlivetti2009,MetcalfeOlivetti2011,CaicedoMetcalfeRodriguezRogger2013,CaicedoMetcalfeRodriguezRogger2017}.

Bi-G\"{o}del (symmetric G\"{o}del in~\cite{GrigoliaKiseliovaOdisharia2016}) logic $\biG$ expands $\mathsf{F}$ with $\coimplies$ or Baaz' Delta operator $\triangle$. This allows expressing \emph{strict} order. Thus, modal expansions of $\biG$ can formalise statements such as ‘I think that Paula's dog is rather a~golden retriever than a dachshund’ given above where ‘rather’ is construed as ‘strictly more confident’. $\KbiG$ (the expansion of $\biG$ with $\Box$ and $\lozenge$) was introduced in~\cite{BilkovaFrittellaKozhemiachenko2022IJCAR} and given an axiomatisation in~\cite{BilkovaFrittellaKozhemiachenko2022IGPLarxiv}. Additionally, a temporal expansion of bi-G\"{o}del logic was introduced in~\cite{AguileraDieguezFernandez-DuqueMcLean2022}. The satisfiability and validity of both logics are also in $\pspace$.

Finally, just as intuitionistic logic can be expanded with a De Morgan negation, so can be G\"{o}del logic as well (cf.~\cite{Wansing2008} for expansions of intuitionistic propositional logic and~\cite{BilkovaFrittellaKozhemiachenko2021,BilkovaFrittellaKozhemiachenkoMajer2023IJAR} for expansions of $\mathsf{G}$). The negated formula $\neg\phi$ is, as expected, interpreted as ‘$\phi$ is false’. In this paper, we are focussing on the expansion of $\mathsf{G}$ that defines $\neg(\phi\rightarrow\phi')\leftrightarrow(\neg\phi'\coimplies\neg\phi)$ and $\neg(\phi\coimplies\phi')\leftrightarrow(\neg\phi'\rightarrow\neg\phi)$ after $\mathsf{I}_4\mathsf{C}_4$\footnote{The logic was introduced independently by different authors~\cite{Wansing2008,Leitgeb2019}, and further studied in~\cite{OdintsovWansing2021}. It is the propositional fragment of Moisil's modal logic~\cite{Moisil1942}. We are grateful to Heinrich Wansing who pointed this out to us.} from~\cite{Wansing2008}. We henceforth call this logic $\Gsquare$.

$\Gsquare$ has some nice properties. First, all $\Gsquare$ connectives have their duals. Second, in contrast to $\mathsf{G}$, \emph{it is not the case that either $p\rightarrow q$ or $q\rightarrow p$ has designated value under any valuation}. This means that not all statements are comparable. Indeed, when reasoning about beliefs, it is safe to assume that they are not always comparable: people do not have to believe that a thunderstorm is going to happen today \emph{more (or less) than} they believe that their distant relative has two dogs.

The crisp modal expansion of $\Gsquare$ with $\Box$ and $\lozenge$\footnote{Note that in the presence of $\neg$, we have $\neg\Box\neg\phi\leftrightarrow\lozenge\phi$ and $\neg\lozenge\neg\phi\leftrightarrow\Box\phi$.} was introduced in~\cite{BilkovaFrittellaKozhemiachenko2022IJCAR} and axiomatised in~\cite{BilkovaFrittellaKozhemiachenko2022IGPLarxiv}. In this paper, we make the expected next step: we permit fuzzy frames in $\KGsquare$ and also equip them with \emph{two} relations: $R^+$ and $R^-$ with which we determine, respectively, the positive and negative supports of modal formulas. We call this logic $\birelKGsquare$.

\vspace{.5em}

\textbf{Plan of the paper}
The remaining part of the text is organised as follows. Section~\ref{sec:language} contains the necessary logical preliminaries as well as the semantical definitions for $\birelKGsquare$. In Section~\ref{sec:comparison}, we establish relations between $\KbiG$, $\KGsquare$, and $\birelKGsquare$. Then, in Section~\ref{sec:semantics}, we investigate the correspondence between classes on frames and formulas valid on them. In Section~\ref{sec:tableaux}, we design a sound and complete tableaux system for the $\birelKGsquare$ over finitely branching frames and prove that its satisfiability and validity are $\pspace$-complete. Finally, in Section~\ref{sec:conclusion}, we summarise the results of the paper and sketch the plan for future work.
\section{Language and semantics\label{sec:language}}
To make the exposition self-contained, we begin with the algebraic semantics of $\biG$ and then use it to obtain Kripke semantics of $\birelKGsquare$.
\begin{definition}\label{def:bi-G_algebra}
The bi-G\"{o}del algebra $[0,1]_{\mathsf{G}}=\langle[0,1],0,1,\wedge_\mathsf{G},\vee_\mathsf{G},\rightarrow_{\mathsf{G}},\coimplies,\triangle,{\sim}\rangle$ is defined as follows: for all $a,b\in[0,1]$, we have $a\wedge_\mathsf{G}b=\min(a,b)$, $a\vee_\mathsf{G}b=\max(a,b)$. The remaining operations are defined below:
\begin{align*}
a\rightarrow_\mathsf{G}b&=
\begin{cases}
1,\text{ if }a\leq b\\
b\text{ else}
\end{cases}
&
a\coimplies_\mathsf{G}b&=
\begin{cases}
0,\text{ if }a\leq b\\
a\text{ else}
\end{cases}
\\
{\sim_\mathsf{G}}a&=
\begin{cases}
0,\text{ if }a>0\\
1\text{ else}
\end{cases}
&
\triangle_\mathsf{G}a&=
\begin{cases}
0,\text{ if }a<1\\
1\text{ else}
\end{cases}
\end{align*}
\end{definition}
Note that $\triangle_\mathsf{G}$ and ${\coimplies_\mathsf{G}}$ are interdefinable:
\begin{align*}
\triangle_\mathsf{G}a&=1\coimplies_\mathsf{G}(1\coimplies_\mathsf{G}a)&a\coimplies_\mathsf{G}b&=a\wedge{\sim_\mathsf{G}}\triangle_\mathsf{G}(a\rightarrow b)
\end{align*}
\begin{definition}\label{def:frames}
A \emph{bi-relational frame} is a tuple $\mathfrak{F}\!=\!\langle W,R^+,R^-\rangle$ with $W\!\neq\!\varnothing$ and
\begin{enumerate}[noitemsep,topsep=2pt]
\item $R^+,R^-:W\times W\rightarrow\{0,1\}$ if $\mathfrak{F}$ is \emph{crisp}\footnote{Equivalently, $R^+$ and $R^-$ in crisp frames are \emph{relations} on $W$.};
\item $R^+,R^-:W\times W\rightarrow[0,1]$ if $\mathfrak{F}$ is \emph{fuzzy}.
\end{enumerate}
\end{definition}
\begin{definition}[Language and semantics]\label{def:semantics}
We fix a countable set $\Prop$ of propositional variables and define the language $\bimodalLsquare$ via the following grammar in BNF.
\[\bimodalLsquare\ni\phi\coloneqq p\in\Prop\mid\neg\phi\mid(\phi\wedge\phi)\mid(\phi\rightarrow\phi)\mid\Box\phi\mid\lozenge\phi\]
Constants $\mathbf{0}$ and $\mathbf{1}$, $\vee$, $\coimplies$, $\triangle$, and G\"{o}del negation ${\sim}$ can be defined as usual:
\begin{align*}
\mathbf{1}&\coloneqq p\!\rightarrow\!p&\mathbf{0}&\coloneqq\neg\mathbf{1}&{\sim}\phi&\coloneqq\phi\!\rightarrow\!\mathbf{0}\\\triangle\phi&\coloneqq\mathbf{1}\coimplies(\mathbf{1}\coimplies\phi)&\phi\!\vee\!\phi'&\coloneqq\neg(\neg\phi\!\wedge\!\neg\phi')&\phi\!\coimplies\!\phi'&\coloneqq\neg(\neg\phi'\!\!\rightarrow\!\!\neg\phi)
\end{align*}

A \emph{$\birelKGsquare$ model} is a tuple $\mathfrak{M}=\langle W,R^+,R^-,v_1,v_2\rangle$ with $\langle W,R^+,R^-\rangle$ being a~crisp or fuzzy frame and $v_1,v_2:\Prop\rightarrow[0,1]$ that are extended to complex formulas as follows.
\begin{center}
\begin{tabular}{rclrcl}
$v_1(\neg\phi,w)$&$=$&$v_2(\phi,w)$&$v_2(\neg\phi,w)$&$=$&$v_1(\phi,w)$\\
$v_1(\phi\wedge\phi',w)$&$=$&$v_1(\phi,w)\wedge_\mathsf{G}v_1(\phi',w)$&$v_2(\phi\wedge\phi',w)$&$=$&$v_2(\phi,w)\vee_\mathsf{G}v_2(\phi',w)$\\
$v_1(\phi\rightarrow\phi',w)$&$=$&$v_1(\phi,w)\!\rightarrow_\mathsf{G}\!v_1(\phi',w)$&$v_2(\phi\rightarrow\phi',w)$&$=$&$v_2(\phi',w)\coimplies_\mathsf{G}v_2(\phi,w)$
\end{tabular}
\end{center}

The semantics of modalities is as follows.
\begin{center}
\begin{tabular}{rclrcl}
$v_1(\Box\phi,w)$&$=$&$\inf\limits_{w'\!\in\!W}\!\{wR^+w'\!\!\rightarrow_\mathsf{G}\!\!v_1(\phi,w')\}$
&
$v_2(\Box\phi,w)$&$=$&$\sup\limits_{w'\!\in\!W}\!\{wR^-w'\!\!\wedge_\mathsf{G}\!\!v_2(\phi,w')\}$\\
$v_1(\lozenge\phi,w)$&$=$&$\sup\limits_{w'\!\in\!W}\!\{wR^+w'\!\wedge_\mathsf{G}\!v_1(\phi,w')\}$
&
$v_2(\lozenge\phi,w)$&$=$&$\sup\limits_{w'\!\in\!W}\!\{wR^-w'\!\rightarrow_\mathsf{G}\!v_2(\phi,w')\}$
\end{tabular}
\end{center}

We will further write $v(\phi,w)=(x,y)$ to designate that $v_1(\phi,w)=x$ and $v_2(\phi,w)=y$. We also set $S(w)=\{w':wSw'>0\}$.

We say that $\phi$ is \emph{valid on $\mathfrak{F}$} ($\mathfrak{F}\models\phi$) iff for every $v_1$ and $v_2$ on $\mathfrak{F}$ and every $w\!\in\!\mathfrak{F}$, it holds that $v(\phi,w)\!=\!(1,0)$. $\phi$ is \emph{$\birelKGsquare$ valid} iff it is valid on every frame.
\end{definition}

\begin{remark}\label{rem:v1equivalence}
Observe that the semantic conditions of the support of the truth ($v_1$) coincide with the semantics of $\KbiG$. Furthermore, the semantics of $\KGsquare$ can be retrieved by assuming that $R^+$ and $R^-$ are crisp and $R^+=R^-$. Cf.~\cite{BilkovaFrittellaKozhemiachenko2022IJCAR} for more detailed semantics of the logics.
\end{remark}
\section{$\KbiG$, $\KGsquare$, and $\birelKGsquare$\label{sec:comparison}}
Definition~\ref{def:semantics} gives a~reason to believe that $\birelKGsquare$ is in a sense intermediate between crisp $\KbiG$ and $\KGsquare$. In this section, we investigate the following questions.
\begin{enumerate}[noitemsep,topsep=2pt]
\item $\Box$ and $\lozenge$ are not interdefinable in $\KbiG$~\cite[Proposition~3]{BilkovaFrittellaKozhemiachenko2022IJCAR} but $\neg\Box\neg p\leftrightarrow\lozenge p$ and $\lozenge p\leftrightarrow\neg\Box\neg p$ are $\KGsquare$ valid. Are $\Box$ and $\lozenge$ interdefinable in $\birelKGsquare$?
\item $\KGsquare$ extends crisp $\KbiG$ and is conservative w.r.t.\ $\neg$-free formulas~\cite[Proposition~2]{BilkovaFrittellaKozhemiachenko2022IJCAR}. Does $\birelKGsquare$ (on mono- or bi-relational frames) extend fuzzy $\KbiG$? Does crisp $\birelKGsquare$ on bi-relational frames extend crisp $\KbiG$?
\end{enumerate}

We first show that $\Box$ and $\lozenge$ are, in fact, not interdefinable in $\birelKGsquare$.
\begin{theorem}\label{theorem:nondefinability}
$\Box$ and $\lozenge$ are not interdefinable.
\end{theorem}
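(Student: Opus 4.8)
The plan is to prove both non-definability directions by the standard model-theoretic method: for each modality I would exhibit two pointed models that agree on \emph{every} formula built from the \emph{other} modality together with all the connectives (including $\neg$), yet disagree on the target modality. Since validity of $\chi\leftrightarrow\lozenge p$ forces $v_1(\chi,w)=v_1(\lozenge p,w)$ at every world of every model, a single such pair rules out all candidate definitions at once. The guiding observation is an asymmetry in how the two fragments read the relations. Unfolding the clauses (and using $v_1(\neg\psi)=v_2(\psi)$), a $\lozenge$-free formula accesses $R^+$ only through infima: $v_1(\Box\theta,w)=\inf_{w'}(wR^+w'\rightarrow_\mathsf{G}v_1(\theta,w'))$ and, via negation, $v_2(\neg\Box\neg\theta,w)=v_1(\Box\neg\theta,w)$ is again an $R^+$-infimum. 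There is no $\lozenge$-free way to take an $R^+$-\emph{supremum} of a truth degree, which is exactly what $v_1(\lozenge\theta,w)=\sup_{w'}(wR^+w'\wedge_\mathsf{G}v_1(\theta,w'))$ does. So I would change a model so that an $R^+$-supremum of $v_1(p)$ moves while all $R^+$-infima of $\lozenge$-free formulas stay put.

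Concretely, take $r$ with two crisp $R^+$-successors $u,w$, empty $R^-$, and $u,w$ leaves; put $v(p,u)=(\tfrac12,0)$ in both models, and $v(p,w)=(\tfrac12,0)$ in one model, $(\tfrac34,0)$ in the other. Then $v_1(\lozenge p,r)=\max(v_1(p,u),v_1(p,w))$ jumps from $\tfrac12$ to $\tfrac34$. The ``capping'' successor $u$ fixed at $\tfrac12$ is what makes every $R^+$-infimum insensitive to the change at $w$. To make this precise I would prove the key lemma: at a leaf with $v(p,\cdot)=(x,0)$, $x\in(0,1)$, the pair-values of all $\lozenge$-free formulas lie in the nine-element set $\{0,x,1\}^2$, which is closed under $\neg,\wedge,\vee,\rightarrow,\coimplies$ and is, for \emph{every} interior $x$, the same twist algebra over the three-element Gödel chain; hence each $v_1(\theta,w)$ is one of $0$, $v_1(p,w)$, $1$ with a choice independent of the interior value. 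Consequently $v_1(\Box\theta,r)=\min(v_1(\theta,u),v_1(\theta,w))$ equals $0$, $1$, or $\min(\tfrac12,v_1(p,w))=\tfrac12$ in both models. An induction on $\lozenge$-free $\chi$ (the modal step being this computation, with $v_2(\Box\theta,r)=0$ forced by the empty $R^-$) then shows $v(\chi,r)$ coincides in the two models, while $\lozenge p$ separates them.

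The non-definability of $\Box$ is handled by the dual construction: empty $R^+$, two crisp $R^-$-successors carrying $v(p,\cdot)=(1,\tfrac12)$ and $(1,\tfrac12)$ versus $(1,\tfrac34)$, using that a $\Box$-free formula reads $R^-$ only through infima on $v_2$ (directly for $\lozenge$ and, through negation, for $\neg\lozenge\neg$), whereas $v_2(\Box p,r)=\sup_{w'}(wR^-w'\wedge_\mathsf{G}v_2(p,w'))=\max(v_2(p,u),v_2(p,w))$ is an $R^-$-supremum and moves from $\tfrac12$ to $\tfrac34$. The same lemma, now applied to second coordinates at leaves with $v(p,\cdot)=(1,y)$, gives the agreement, so no $\Box$-free $\chi$ can be equivalent to $\Box p$. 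Note that both constructions genuinely use $R^+\neq R^-$: when $R^+=R^-$ (as in $\KGsquare$) the interdefinability $\lozenge\phi\leftrightarrow\neg\Box\neg\phi$ is restored, so the separation must live in the bi-relational setting.

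The main obstacle is the key lemma and the induction around it: one has to check that \emph{no} $\lozenge$-free formula, however deeply nested or negated, can convert an $R^+$-supremum into something a chain of $R^+$-infima detects, and the capping-successor trick works only because the Gödel and bi-Gödel connectives cannot separate two distinct interior values. Verifying closure of $\{0,x,1\}^2$ under all connectives and the independence from the choice of interior $x$ is where the real content sits; once that is in hand, the inductive agreement of the two models and the separation by the target modality are routine.
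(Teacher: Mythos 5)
Your proposal is correct, but it takes a genuinely different route from the paper. The paper uses a \emph{single} pointed model: a root $w_0$ with three irreflexive leaves carrying generic values of $p$, one seen only through $R^+$, one only through $R^-$, and one through both; it then lists the possible values at $w_0$ of formulas $\Box\chi$ and $\lozenge\psi$ (sets $X$ and $Y$), closes them under the propositional operations, and argues by induction that every $\lozenge$-free formula lands in $X^c$ and every $\Box$-free formula in $Y^c$, while $v(\lozenge p,w_0)=\left(\tfrac{4}{5},\tfrac{2}{4}\right)\notin X^c$ and $v(\Box p,w_0)=\left(\tfrac{3}{5},\tfrac{3}{4}\right)\notin Y^c$. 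You instead build, for each direction, a \emph{pair} of models that the other fragment cannot distinguish, using the uniformity of shapes over the twist algebra on $\{0,x,1\}^2$ plus the capping-successor trick; non-definability then follows from indistinguishability rather than from unreachability of a value. Each approach buys something: the paper's is more economical (one model settles both directions and exhibits the precise obstruction values), but its enumeration is delicate, because a single formula can have different shapes at leaves where $v_1(p)>v_2(p)$ and where $v_1(p)<v_2(p)$ --- for instance $v(\Box(p\vee\neg p),w_0)=\left(\tfrac{3}{5},\tfrac{2}{4}\right)$, which lies outside the printed $X$ and even outside $X^c$, so the intermediate containment claim needs repair (the theorem survives because the coordinate $\tfrac{4}{5}$, resp.\ $\tfrac{3}{4}$, remains unreachable); your construction, in which all leaves have the same orientation and one relation is empty, is immune to exactly this complication and reduces the whole verification to closure of a nine-element set and one minimum. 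Two points to make your write-up airtight: fix $v(p,r)$ to be identical in the two models of each pair (you never specify it), and note that your dual construction for $\Box$ reads $v_2(\lozenge\phi,w)$ as an \emph{infimum} over $R^-$ --- this is indeed the intended semantics, and it is what the paper itself uses in the proofs of its Theorems~2, 4 and~5, although the table in Definition~3 misprints that clause as a supremum; with the misprinted reading your second pair of models would fail, so the dependence is worth flagging explicitly.
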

\begin{proof}
Denote with $\mathcal{L}_\Box$ and $\mathcal{L}_\lozenge$ the $\lozenge$-free and $\Box$-free fragments of $\bimodalLsquare$, respectively. To prove the statement, it suffices to find a pointed model $\langle\mathfrak{M},w\rangle$ s.t.\ there is no $\mathcal{L}_\lozenge$ formula has the same value at $w$ as $\Box p$ and vice versa.

Consider the following model (all variables have the same values in all states exemplified by $p$). We have $v(\Box p,w_0)=\left(\frac{3}{5},\frac{3}{4}\right)$ and $v(\lozenge p,w_0)=\left(\frac{4}{5},\frac{2}{4}\right)$.
\[\xymatrix{w_1:p=\left(\frac{4}{5},\frac{1}{4}\right)&w_2:p=\left(\frac{2}{5},\frac{3}{4}\right)&w_3:p=\left(\frac{3}{5},\frac{2}{4}\right)\\&w_0:p=(1,0)\ar[u]|{-}\ar[ur]|{\pm}\ar[ul]|{+}&}\]

It is easy to check that $v(\phi,t)\in\{v(p,t),v(\neg p,t),(1,0),(0,1)\}$ for every $\phi\!\in\!\bimodalLsquare$ over one variable on the single-point irreflexive frame with a state $t$. Thus, for every $\chi\in\mathcal{L}_\Box$ and every $\psi\!\in\!\mathcal{L}_\lozenge$ it holds that
\begin{align*}
v(\Box\chi,w_0)&\in\left\{(0;1),\left(\frac{3}{5};\frac{3}{4}\right),\left(\frac{1}{4};\frac{3}{5}\right),\left(\frac{3}{4};\frac{3}{5}\right),\left(\frac{3}{5};\frac{1}{4}\right),(1;0)\right\}=X\\
v(\lozenge\psi,w_0)&\in\left\{(0;1),\left(\frac{4}{5};\frac{2}{4}\right),\left(\frac{2}{4};\frac{2}{5}\right),\left(\frac{2}{4};\frac{4}{5}\right),\left(\frac{2}{5};\frac{2}{4}\right),(1;0)\right\}=Y
\end{align*}
Now, let $X^c$ and $Y^c$ be the closures of $X$ and $Y$ under propositional operations. It is clear\footnote{Cf.~the full sets in the appendix.} that $\left(\frac{3}{5};\frac{3}{4}\right)\notin Y^c$ and $\left(\frac{4}{5};\frac{2}{4}\right)\notin X^c$. It is also easy to verify by induction that for all $\chi'\in\mathcal{L}_\Box$ and $\psi'\in\mathcal{L}_\lozenge$, it holds that $v(\chi',w_0)\in X^c$ and $v(\psi',w_0)\in Y^c$. The result now follows.
\end{proof}

The next statement gives the negative answer to the first half of the second question.
\begin{theorem}\label{theorem:noextension}
Fuzzy $\birelKGsquare$ does not extend fuzzy $\KbiG$.
\end{theorem}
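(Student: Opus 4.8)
The plan is to exhibit one $\KbiG$-valid formula that is not $\birelKGsquare$ valid. By Remark~\ref{rem:v1equivalence} the support of truth $v_1$ obeys exactly the $\KbiG$ clauses, so any $\KbiG$-valid $\phi$ satisfies $v_1(\phi,w)=1$ in every $\birelKGsquare$ model, whatever $R^+$ is. Hence such a $\phi$ can fail $\birelKGsquare$ validity only through its support of falsity, and it suffices to build a model with $v_2(\phi,w)>0$. The point I would stress is that $v_2$ never mentions $R^+$: it depends only on $R^-$ and on the $v_2$-values of the variables, both of which I am free to choose after fixing $\phi$. So the task reduces to finding a $\KbiG$ validity whose ``falsity reading'' over $R^-$ can be made strictly positive.

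To locate such a formula I would first observe that most obvious validities dualise to an identically-zero $v_2$. Computing $v_2$ reads every connective by its De Morgan dual: $\wedge$ becomes $\vee$, $\vee$ becomes $\wedge_\mathsf{G}$, $\rightarrow$ becomes $\coimplies_\mathsf{G}$ with swapped arguments, $\Box$ becomes a genuine $R^-$-diamond, and $\lozenge$ becomes the $\sup$-of-implications operator of Definition~\ref{def:semantics}. For propositional tautologies, the monotonicity laws, and the normality/$K$ laws for $\Box$ one checks that this dualisation yields $v_2\equiv0$, essentially because the required distributivities ($\sup$ over $\max$, $\inf$ over $\min$) hold on both readings. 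The mismatch I would exploit is $\lozenge$-normality,
\[\phi\;=\;\lozenge(p\vee q)\rightarrow(\lozenge p\vee\lozenge q),\]
which is $\KbiG$ valid because under $v_1$ it only asks that $\sup$ distribute over $\max$. Under $v_2$ the two disjunctions become conjunctions, so the same formula now asks the $R^-$-reading of $\lozenge$ to distribute over $\min$, and $\sup_{w'}\{wR^-w'\rightarrow_\mathsf{G}v_2(\cdot,w')\}$ does not commute with $\min$.

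I would then give the witnessing model. Take $W=\{w_1,w_2\}$ with $R^-$ total of uniform weight $w_iR^-w_j=\tfrac12$ (so that no world is $R^-$-unreachable from the evaluation point); $R^+$ is irrelevant and may be taken empty. Place the falsity of $p$ and of $q$ on different worlds, $v_2(p,w_1)=\tfrac14$, $v_2(q,w_1)=0$, $v_2(p,w_2)=0$, $v_2(q,w_2)=\tfrac14$, and evaluate at $w_1$. As every value is below $\tfrac12$, each term $\tfrac12\rightarrow_\mathsf{G}v_2(\cdot,w_j)$ collapses to $v_2(\cdot,w_j)$, giving $v_2(\lozenge p,w_1)=v_2(\lozenge q,w_1)=\tfrac14$ and hence $v_2(\lozenge p\vee\lozenge q,w_1)=\tfrac14$; while $v_2(p\vee q,w_j)=\min(\cdot,\cdot)=0$ gives $v_2(\lozenge(p\vee q),w_1)=0$. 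Thus $v_2(\phi,w_1)=\tfrac14\coimplies_\mathsf{G}0=\tfrac14>0$ and, by validity, $v_1(\phi,w_1)=1$, so $v(\phi,w_1)=(1,\tfrac14)\neq(1,0)$.

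The main obstacle is the middle step, not the arithmetic: one must realise that the natural candidate validities all collapse to $v_2\equiv0$, so the witness has to be a validity whose proof genuinely uses distribution of $\sup$ over $\max$ and which, after the $\vee\mapsto\wedge_\mathsf{G}$ swap, turns into a false distribution of the $\lozenge$-falsity operator over $\min$. The real content is recognising that the $\sup$-with-$\rightarrow_\mathsf{G}$ shape of the falsity clause for $\lozenge$ is exactly what breaks the transfer --- with an $\inf$-clause the same formula would dualise to $0$. A secondary care point is keeping $R^-$ total at the evaluation world, to avoid the degenerate regime where every $v_2(\lozenge\cdot)$ is forced to $1$.
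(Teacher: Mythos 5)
Your global strategy is exactly the paper's: since $v_1$ obeys the $\KbiG$ clauses (Remark~\ref{rem:v1equivalence}), any $\KbiG$-validity automatically gets $v_1\equiv 1$, so it suffices to produce a model where its $v_2$ is positive. The gap is in the witness. Your whole computation rests on reading $v_2(\lozenge\phi,w)$ as $\sup_{w'}\{wR^-w'\rightarrow_\mathsf{G}v_2(\phi,w')\}$, i.e.\ on Definition~\ref{def:semantics} as literally typeset; but that $\sup$ is a typo for $\inf$. Evidence: (i) under the $\sup$ reading, $v_2(\lozenge\phi,w)=1$ as soon as a single world is $R^-$-inaccessible from $w$ (because $0\rightarrow_\mathsf{G}x=1$) --- the degeneracy you noticed and worked around by making $R^-$ total is in fact a sign the clause cannot be intended; (ii) the paper itself evaluates $v_2(\lozenge\cdot)$ as an \emph{infimum} in the proofs of Theorems~\ref{theorem:crispdefinition}, \ref{theorem:1relationdefinable} and~\ref{theorem:finitebranching}; (iii) Theorem~\ref{theorem:1relationdefinable} (that $\Box p\leftrightarrow\neg\lozenge\neg p$ defines $R^+=R^-$) is simply false under the $\sup$ reading. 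Under the intended $\inf$ clause your formula $\lozenge(p\vee q)\rightarrow(\lozenge p\vee\lozenge q)$ is $\birelKGsquare$ valid, so \emph{no} model can falsify it: since $a\rightarrow_\mathsf{G}\min(b,c)=\min(a\rightarrow_\mathsf{G}b,a\rightarrow_\mathsf{G}c)$ and $\inf$ commutes with $\min$, one gets $v_2(\lozenge(p\vee q),w)=v_2(\lozenge p\vee\lozenge q,w)$ in every model, whence the $v_2$-value of your implication is identically $0$ (and $v_1$ identically $1$). You flagged this yourself --- ``with an $\inf$-clause the same formula would dualise to $0$'' --- and that remark is precisely the refutation of your witness for the logic the theorem is actually about.

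For comparison, the paper's witness is $\lozenge{\sim\sim}p\rightarrow{\sim\sim}\lozenge p$ (a fuzzy $\KG$, hence $\KbiG$, validity), falsified on the one-arrow \emph{mono-relational} fuzzy frame $wR^+w'=wR^-w'=\frac{1}{2}$ with $v(p,w')=\left(1,\frac{2}{3}\right)$: there $v_2(\lozenge p,w)=\left(\frac{1}{2}\rightarrow_\mathsf{G}\frac{2}{3}\right)=1$, so $v_2({\sim\sim}\lozenge p,w)=1$, while $v_2({\sim\sim}p,w')=0$ gives $v_2(\lozenge{\sim\sim}p,w)=\left(\frac{1}{2}\rightarrow_\mathsf{G}0\right)=0$, so the implication receives $v_2$-value $1\coimplies_\mathsf{G}0=1$. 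The failure mechanism is thus not a broken distribution law but the \emph{fuzziness of $R^-$} interacting with Gödel implication: a relation degree lying below the falsity degree of $p$ makes the implication jump to $1$ and mask nonzero falsity, and ${\sim\sim}$ then detects the mismatch. Any repair of your argument needs a witness of this kind; in your model all the relevant implications collapse to their consequents, so no such jump can occur. Note also that the paper's counterexample, being mono-relational, yields the stronger conclusion that even the mono-relational fragment of fuzzy $\birelKGsquare$ fails to extend fuzzy $\KbiG$, whereas your frame essentially needs $R^+\neq R^-$.
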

\begin{proof}
Recall that $\lozenge{\sim\sim}p\rightarrow{\sim\sim}\lozenge p$ is a theorem of fuzzy G\"{o}del modal logic and that $\KbiG$ extends fuzzy $\KG$~\cite[Proposition~2]{BilkovaFrittellaKozhemiachenko2022IJCAR}. Thus, $\lozenge{\sim\sim}p\rightarrow{\sim\sim}\lozenge p$ is $\KbiG$ valid. Consider the model below.
\[\xymatrix{w~\ar[rr]^(.3){R^+=R^-=\frac{1}{2}}&&~w':p=\left(1,\frac{2}{3}\right)}\]
It is clear that $v_2({\sim\sim}\lozenge p,w)=1$ but $v_2(\lozenge{\sim\sim}p,w)=0$. Thus, $v_2(\lozenge{\sim\sim}p\rightarrow{\sim\sim}\lozenge p,w)=1$, i.e., $\lozenge{\sim\sim}p\rightarrow{\sim\sim}\lozenge p$ is not valid in $\birelKGsquare$.
\end{proof}

Note that we used a \emph{mono-relational} fuzzy frame in the proof of the above theorem. It remains to consider the \emph{crisp $\birelKGsquare$ over bi-relational frames}. In the remainder of the section, we show that it \emph{does extend} crisp $\KbiG$. The next lemma is a straightforward adaptation of~\cite[Proposition~1]{BilkovaFrittellaKozhemiachenko2022IJCAR}.
\begin{lemma}\label{lemma:splitconflation}
Let $\mathfrak{M}=\langle W,R^+,R^-,v_1,v_2\rangle$ be a crisp model% s.t.\ $R^+\cap R^-=\varnothing$
. Let further $\mathfrak{M}^*=\langle W,(R^+)^*,(R^-)^*,v^*_1,v^*_2\rangle$ be as follows: $(R^+)^*=R^-$, $(R^-)^*=R^+$, $v^*_1(p,w)=1-v_2(p,w)$, and $v^*_2(p,w)=1-v_1(p,w)$.

Then, $v(\phi,w)=(x,y)$ iff $v^*(\phi,w)=(1-y,1-x)$.
\end{lemma}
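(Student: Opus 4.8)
The plan is to reformulate the biconditional as the single claim that, for every $\phi$ and every $w$, we have $v^*_1(\phi,w)=1-v_2(\phi,w)$ and $v^*_2(\phi,w)=1-v_1(\phi,w)$, and to prove this by induction on the complexity of $\phi$. Writing $N(x)=1-x$ for the order-reversing involution on $[0,1]$, the engine of the argument is a small collection of De~Morgan-style identities for the G\"odel operations, which I would record first:
\begin{align*}
N(a\wedge_\mathsf{G}b)&=N(a)\vee_\mathsf{G}N(b), & N(a\vee_\mathsf{G}b)&=N(a)\wedge_\mathsf{G}N(b),\\
N(a\rightarrow_\mathsf{G}b)&=N(b)\coimplies_\mathsf{G}N(a), & N(a\coimplies_\mathsf{G}b)&=N(b)\rightarrow_\mathsf{G}N(a),
\end{align*}
together with the fact that $N$ exchanges $\inf$ and $\sup$. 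These are immediate from the case definitions in Definition~\ref{def:bi-G_algebra}, since $a\leq b$ iff $N(b)\leq N(a)$.

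The base case $\phi=p$ holds by the very definition of $v^*_1$ and $v^*_2$ on $\Prop$. For the propositional inductive steps the identities above do all the work: the clause for $\neg$ follows from $N\circ N=\mathrm{id}$ and the swap $v_1(\neg\phi,w)=v_2(\phi,w)$; the clause for $\wedge$ uses the $\wedge_\mathsf{G}/\vee_\mathsf{G}$ duality; and the two clauses for $\rightarrow$ use, respectively, $N(a\coimplies_\mathsf{G}b)=N(b)\rightarrow_\mathsf{G}N(a)$ and $N(a\rightarrow_\mathsf{G}b)=N(b)\coimplies_\mathsf{G}N(a)$ together with the induction hypothesis. Each of these is a routine substitution, and I would not spell them out beyond indicating which identity is invoked.

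The modal steps are where crispness is essential, and this is the step I expect to be the main obstacle. Recall that in $\mathfrak{M}^*$ we have $(R^+)^*=R^-$ and $(R^-)^*=R^+$, so the $v^*$-clause for $\Box$ is computed along $R^-$ and that for $\lozenge$ along $R^+$. Unfolding $v^*_1(\Box\phi,w)$, applying the induction hypothesis $v^*_1(\phi,w')=N(v_2(\phi,w'))$, and pushing $N$ through the infimum, I would need the identity $N(r\wedge_\mathsf{G}a)=r\rightarrow_\mathsf{G}N(a)$ to convert the $\sup$-of-$\wedge_\mathsf{G}$ defining $v_2(\Box\phi,w)$ into the $\inf$-of-$\rightarrow_\mathsf{G}$ defining $v^*_1(\Box\phi,w)$; dually, $v^*_2(\Box\phi,w)$ requires $N(r\rightarrow_\mathsf{G}a)=r\wedge_\mathsf{G}N(a)$, and the two clauses for $\lozenge$ are symmetric. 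The crucial point is that these last two identities hold \emph{only} when the accessibility degree $r$ lies in $\{0,1\}$: for $r\in(0,1)$ one checks, e.g.\ with $r=\tfrac12$ and $a=\tfrac12$, that $N(r\wedge_\mathsf{G}a)=\tfrac12\neq 1=r\rightarrow_\mathsf{G}N(a)$. Thus crispness of $R^+,R^-$ is exactly what makes the modal clauses of the two valuations dual to one another, and verifying these two $\{0,1\}$-identities (followed by the $\inf$/$\sup$ exchange) is the heart of the induction; the biconditional in the statement then follows since $N$ is an involution.
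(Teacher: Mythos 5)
Your proof is correct and follows essentially the same route as the paper's: an induction on $\phi$ whose base case holds by construction, whose propositional cases rest on the De~Morgan dualities of the bi-G\"{o}del operations under $N(x)=1-x$ (which the paper cites from prior work rather than proving inline), and whose modal cases use crispness to make the $\inf$-of-$\rightarrow_\mathsf{G}$ and $\sup$-of-$\wedge_\mathsf{G}$ clauses dual to one another. The only (presentational) difference is in how crispness is deployed: the paper uses it to rewrite the modal clauses as plain $\inf$/$\sup$ over accessible worlds before applying the induction hypothesis, whereas you keep the relation degrees in place and invoke the pointwise identities $N(r\wedge_\mathsf{G}a)=r\rightarrow_\mathsf{G}N(a)$ and $N(r\rightarrow_\mathsf{G}a)=r\wedge_\mathsf{G}N(a)$ for $r\in\{0,1\}$, usefully noting via a counterexample that these fail for fuzzy $r$.
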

\begin{proof}
We proceed by induction on $\phi$. The basis case of propositional variables holds by the construction of $\mathfrak{M}^*$. The cases of propositional connectives hold by~\cite[Proposition~5]{BilkovaFrittellaKozhemiachenko2021}. We consider the case of $\phi=\Box\psi$.

Let $v(\Box\psi,w)=(x,y)$. Then $\inf\{v_1(\psi,w'):wR^+w'\}=x$, and $\sup\{v_2(\psi,w'):wR^-w'\}=y$. Now, we apply the induction hypothesis to $\psi$, and thus if $v(\psi,s)=(x',y')$, then $v^*_1(\psi,s)=1-y'$ and $v^*_2(\psi,s')=1-x'$ for any $s\in R^+(w)=(R^-)^*(w)$ and $s'\in R^-(w)=(R^+)^*(w)$. But then $\inf\{v^*_1(\psi,w'):w(R^+)^*w'\}=1-y$, and $\sup\{v^*_2(\psi,w'):w(R^-)^*w'\}=1-x$, as required.
\end{proof}
\begin{theorem}\label{theorem:crispextension}
Let $\phi$ be a $\neg$-free formula. Then, $\phi$ is crisp $\KbiG$ valid iff it is crisp $\birelKGsquare$ valid.
\end{theorem}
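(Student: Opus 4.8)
The plan is to treat the two coordinates of a $\birelKGsquare$ value separately, exploiting that for a $\neg$-free $\phi$ the recursions for $v_1$ and $v_2$ never interact: negation is the only clause linking them, so $v_1(\phi,\cdot)$ is computed from $R^+$ and the values of $v_1$ on $\Prop$ exactly by the $\KbiG$ clauses (Remark~\ref{rem:v1equivalence}), while $v_2(\phi,\cdot)$ is computed from $R^-$ and the values of $v_2$ on $\Prop$ by the dual, support-of-falsity clauses. In particular, on any crisp bi-relational model the first coordinate $v_1(\phi,w)$ is literally the $\KbiG$ value of $\phi$ on the mono-relational crisp model $\langle W,R^+,v_1\rangle$.

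For the direction from crisp $\birelKGsquare$ validity to crisp $\KbiG$ validity, I would argue by embedding: given an arbitrary crisp mono-relational $\KbiG$ model $\langle W,R,v\rangle$, expand it to a crisp bi-relational $\birelKGsquare$ model by setting $R^+:=R$ and choosing $R^-$ and $v_2$ arbitrarily, with $v_1:=v$ on $\Prop$. Since $\phi$ is $\neg$-free, $v_1(\phi,w)$ reproduces the $\KbiG$ value $v(\phi,w)$, and crisp $\birelKGsquare$ validity forces $v_1(\phi,w)=1$ for every $w$; hence $\phi$ is crisp $\KbiG$ valid.

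The converse is the substantive direction. Fix an arbitrary crisp bi-relational model $\mathfrak{M}=\langle W,R^+,R^-,v_1,v_2\rangle$ and a world $w$; I must show $v(\phi,w)=(1,0)$. The first coordinate is immediate from the observation above: $v_1(\phi,w)$ is the $\KbiG$ value of $\phi$ on $\langle W,R^+,v_1\rangle$, which equals $1$ because $\phi$ is crisp $\KbiG$ valid. The genuine obstacle is the falsity coordinate, namely $v_2(\phi,w)=0$, since the $v_2$-clauses are \emph{not} themselves directly a $\KbiG$ valuation. Here I would invoke the conflation of Lemma~\ref{lemma:splitconflation}: passing to $\mathfrak{M}^*$, the equivalence $v(\phi,w)=(x,y)$ iff $v^*(\phi,w)=(1-y,1-x)$ yields that $v_2(\phi,w)=0$ iff $v_1^*(\phi,w)=1$. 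But $\mathfrak{M}^*$ is again crisp and bi-relational, and for the $\neg$-free $\phi$ the value $v_1^*(\phi,w)$ is precisely the $\KbiG$ value of $\phi$ on the mono-relational crisp model $\langle W,(R^+)^*,v_1^*\rangle=\langle W,R^-,v_1^*\rangle$; by $\KbiG$ validity this is $1$, so $v_2(\phi,w)=0$, as required.

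Thus the whole argument reduces the behaviour of the support of falsity to the already-understood support of truth via Lemma~\ref{lemma:splitconflation}, and the $\neg$-freeness hypothesis is used twice and essentially: once to decouple $v_1$ from $v_2$, and once more to ensure that the conflated valuation $v_1^*$ is a bona fide $\KbiG$ valuation. I expect the only point needing care to be the bookkeeping of which relation drives which coordinate after conflation (the swap $(R^+)^*=R^-$), which is exactly what Lemma~\ref{lemma:splitconflation} is set up to handle.
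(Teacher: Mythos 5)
Your proof is correct and follows essentially the same route as the paper's: the easy direction via the observation that $v_1$ on $\neg$-free formulas is literally a crisp $\KbiG$ valuation over $R^+$ (Remark~\ref{rem:v1equivalence}), and the substantive $v_2$ coordinate handled by Lemma~\ref{lemma:splitconflation}, which turns $v_2(\phi,w)=0$ into $v_1^*(\phi,w)=1$ on the $\KbiG$ model $\langle W,R^-,v_1^*\rangle$. The only difference is presentational: you argue the hard direction directly, while the paper states it contrapositively.
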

\begin{proof}
%By Lemma~\ref{lemma:splitequivalence}, it suffices to consider only split models.
It is clear that if $\phi$ is \emph{not} $\KbiG$ valid, then it is not $\birelKGsquare$ valid either (recall Remark~\ref{rem:v1equivalence}). For the converse, it follows from Lemma~\ref{lemma:splitconflation}, that if $v_2(\phi,w)>0$ for some frame $\mathfrak{F}=\langle W,R^+,R^-\rangle$, $w\in\mathfrak{F}$ and $v_2$ on $\mathfrak{F}$, then $v^*_1(\phi,w)<1$. But $\phi$ does not contain $\neg$ and thus depends only on $v_2$ and $R^-$, whence $v^*_1$ is a $\KbiG$ valuation on $\langle W,R^-\rangle$. Thus, $\phi$ is not $\KbiG$ valid either.
\end{proof}

Note that Lemma~\ref{lemma:splitconflation} implies that in order to check crisp $\birelKGsquare$ validity of $\phi$, it suffices to check whether it is always the case that $v_1(\phi,w)=1$ in split models. On the other hand, this reduction, evidently, does not hold for fuzzy $\birelKGsquare$ due to Theorem~\ref{theorem:noextension}.
\section{Correspondence theory and frame definability\label{sec:semantics}}
In this section, we investigate the modal (un)definability of frame classes in crisp and fuzzy $\birelKGsquare$. We begin with corollaries of Lemma~\ref{lemma:splitconflation} that concern the definability of crisp frames.
\begin{definition}\label{def:framecounterparts}
Let $\mathbb{K}$ be a class of crisp frames. A first- or second-order formula $F$ \emph{defines} $\mathbb{K}$ iff for every $\mathfrak{F}=\langle W,R\rangle$, it holds that $\mathfrak{F}\in\mathbb{K}$ iff $\mathfrak{F}\models F(R)$.
\begin{enumerate}[noitemsep,topsep=2pt]
\item The \emph{$+$-counterpart} of $\mathbb{K}$ is the class $\mathbb{K}^+$ of frames $\mathfrak{F}=\langle W,R^+,R^-\rangle$ s.t.\ $\langle W,R^+\rangle\models F(R^+)$.
\item The \emph{$-$-counterpart} of $\mathbb{K}$ is the class $\mathbb{K}^-$ of frames $\mathfrak{F}=\langle W,R^+,R^-\rangle$ s.t.\ $\langle W,R^-\rangle\models F(R^-)$.
\item The \emph{$\pm$-counterpart} of $\mathbb{K}$ is the class $\mathbb{K}^\pm=\mathbb{K}^+\cap\mathbb{K}^-$.
\end{enumerate}
\end{definition}
\begin{corollary}\label{cor:definabilitypreservation}
Let $\phi$ be a $\neg$-free formula that defines a class of frames $\mathfrak{F}=\langle W,R\rangle$ $\mathbb{K}$ in $\KbiG$ and let $\mathbb{K}^\pm$ be the $\pm$-counterpart of $\mathbb{K}$. Then $\phi$ defines $\mathbb{K}^\pm$ in $\birelKGsquare$.
\end{corollary}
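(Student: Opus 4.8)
The plan is to unwind both sides of the biconditional and reduce everything to two independent $\KbiG$-validity statements, one governed by $R^+$ and the other by $R^-$. Write $\mathfrak{F}=\langle W,R^+,R^-\rangle$ and let $F$ be the first- or second-order formula defining $\mathbb{K}$. Since $\mathbb{K}$ consists of crisp frames, every frame in sight is crisp and Lemma~\ref{lemma:splitconflation} is applicable. By Definition~\ref{def:framecounterparts}, $\mathfrak{F}\in\mathbb{K}^\pm$ iff $\langle W,R^+\rangle\in\mathbb{K}$ and $\langle W,R^-\rangle\in\mathbb{K}$; and since $\phi$ defines $\mathbb{K}$ in $\KbiG$, this is in turn equivalent to $\langle W,R^+\rangle\models_{\KbiG}\phi$ and $\langle W,R^-\rangle\models_{\KbiG}\phi$. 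Thus it suffices to show that $\mathfrak{F}\models_{\birelKGsquare}\phi$ holds iff both of these $\KbiG$-validities hold.

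First I would observe that the two supports decouple. Because $\phi$ is $\neg$-free, the recursive clauses for $v_1$ never invoke $v_2$ and use only $R^+$, while those for $v_2$ never invoke $v_1$ and use only $R^-$; moreover $v_1$ and $v_2$ range over all valuations independently. Hence $\mathfrak{F}\models_{\birelKGsquare}\phi$, i.e.\ $v(\phi,w)=(1,0)$ for all $v_1,v_2,w$, is equivalent to the conjunction of (a) $v_1(\phi,w)=1$ for all $v_1,w$, and (b) $v_2(\phi,w)=0$ for all $v_2,w$.

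For (a), Remark~\ref{rem:v1equivalence} identifies the $v_1$-semantics of a $\neg$-free formula on $\langle W,R^+\rangle$ with the $\KbiG$-semantics, so (a) is exactly $\langle W,R^+\rangle\models_{\KbiG}\phi$. For (b), I would invoke the conflation of Lemma~\ref{lemma:splitconflation}: given any $v_2$, extend it arbitrarily to a model $\mathfrak{M}$ and pass to $\mathfrak{M}^*$, so that $v_2(\phi,w)=0$ iff $v^*_1(\phi,w)=1$. Since $\phi$ is $\neg$-free, $v^*_1(\phi,\cdot)$ depends only on the atomic values of $v^*_1$ and on $(R^+)^*=R^-$, so $v^*_1$ behaves as a $\KbiG$-valuation on $\langle W,R^-\rangle$; and as $v_2$ ranges over all valuations, so does $v^*_1=1-v_2$. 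Therefore (b) is equivalent to $\langle W,R^-\rangle\models_{\KbiG}\phi$, exactly as in the proof of Theorem~\ref{theorem:crispextension}. Combining (a) and (b) with the reduction of $\mathbb{K}^\pm$-membership above yields the claim.

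The main point to get right --- and the only genuine obstacle --- is the decoupling in the second paragraph: one must verify that for $\neg$-free $\phi$ the $v_1$- and $v_2$-recursions never cross (the single clause that could mix them is the negation clause, which is absent), and that $\birelKGsquare$-validity quantifies over $v_1$ and $v_2$ separately, so that no hidden coupling between the $R^+$-side and the $R^-$-side survives. Everything else is routine bookkeeping on top of Lemma~\ref{lemma:splitconflation} and Remark~\ref{rem:v1equivalence}.
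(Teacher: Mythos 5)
Your proof is correct and in substance coincides with the paper's own: both arguments reduce $\birelKGsquare$-validity of a $\neg$-free $\phi$ on $\langle W,R^+,R^-\rangle$ to the pair of $\KbiG$-validities on $\langle W,R^+\rangle$ (via Remark~\ref{rem:v1equivalence}) and on $\langle W,R^-\rangle$ (via Lemma~\ref{lemma:splitconflation} together with the bijection $v_2\mapsto 1-v_2$), which is exactly your steps (a) and (b). The only difference is organisational --- the paper argues by contradiction over the two failure modes of definability, while you run the equivalence chain directly, making explicit the decoupling of $v_1$/$R^+$ from $v_2$/$R^-$ that the paper uses implicitly.
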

\begin{proof}
Assume that $\phi$ \emph{does not} define $\mathbb{K}^\pm$. Then, either (1) there is $\mathfrak{F}\notin\mathbb{K}^\pm$ s.t.\ $\mathfrak{F}\models\phi$ or (2) $\mathfrak{H}\not\models\phi$ for some $\mathfrak{H}\in\mathbb{K}^\pm$. Since $\phi$ defines $\mathbb{K}$ in $\KbiG$, it is clear that $\mathfrak{F},\mathfrak{H}\in\mathbb{K}^+$. Thus, we need to reason for contradiction in the case when $\mathfrak{F}\notin\mathbb{K}^-$ or $\mathfrak{H}\notin\mathbb{K}^-$. We prove only (1) as (2) can be tackled in a dual manner.

Observe that $v_2(\phi,w)\!=\!0$ for every $w\!\in\!\mathfrak{F}$ and $v_2$ defined on $\mathfrak{F}\!=\!\langle W,R^+,R^-\rangle$. But then, by Lemma~\ref{lemma:splitconflation} we have that $v^*_1(\phi,w)=1$ for every $w\in\mathfrak{F}$ and $v^*_1$ defined on $\mathfrak{F}$. Thus, since for every $v^*_1$ there is $v_2$ from which it could be obtained, $\phi$ is $\KbiG$ valid on a frame $\langle W,R^-\rangle$ where $R^-$ \emph{is not definable} via $F(R^-)$. Hence, $\phi$~does not define $\mathbb{K}$ in $\KbiG$ either. A contradiction.
\end{proof}

A natural question now is whether it is possible to have definitions of classes of frames that are only $+$-counterparts (or $-$-counterparts) of $\KbiG$-definable frame classes. E.g., a class of frames whose $R^+$ is reflexive but $R^-$ is not necessarily so. The next statement provides a negative answer.
\begin{corollary}\label{cor:partialnondefinability}
Let $F(R^+)$ and $F(R^-)$ be two first- or second-order formulas defining relations $R^+$ and $R^-$. Then, the class $\mathbb{K}$ of crisp frames $\mathfrak{F}\!=\!\langle W,R^+,R^-\rangle$ with only $R^+$ being definable by $F(R^+)$ (resp., only $R^-$ being definable by $F(R^-)$) \emph{is not definable in $\birelKGsquare$}.
\end{corollary}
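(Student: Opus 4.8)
The plan is to use Lemma~\ref{lemma:splitconflation} to show that every class of frames definable in $\birelKGsquare$ is closed under \emph{interchanging} $R^+$ and $R^-$, and then to observe that the class $\mathbb{K}$ in question is not closed under this operation.

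First, for a crisp frame $\mathfrak{F}=\langle W,R^+,R^-\rangle$ write $\mathfrak{F}^*=\langle W,R^-,R^+\rangle$ for the result of swapping the two relations. I would show that $\mathfrak{F}\models\phi$ iff $\mathfrak{F}^*\models\phi$ for every $\phi\in\bimodalLsquare$. This is immediate from Lemma~\ref{lemma:splitconflation}: the starred model $\mathfrak{M}^*$ there is built precisely on $\mathfrak{F}^*$ (since $(R^+)^*=R^-$ and $(R^-)^*=R^+$), the assignment $v\mapsto v^*$ is an involutive bijection between valuations on $\mathfrak{F}$ and valuations on $\mathfrak{F}^*$, and $v(\phi,w)=(1,0)$ iff $v^*(\phi,w)=(1-0,1-1)=(1,0)$. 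Hence $\phi$ has value $(1,0)$ under all valuations and at all states of $\mathfrak{F}$ exactly when the same holds for $\mathfrak{F}^*$. It follows that any class of frames defined in $\birelKGsquare$ by a set of formulas is closed under $\mathfrak{F}\mapsto\mathfrak{F}^*$.

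Next, I would compute the swap of the target class. Treat the $+$-case, i.e.\ $\mathbb{K}=\mathbb{K}^+$ (the $-$-case is symmetric). By Definition~\ref{def:framecounterparts}, $\langle W,R^+,R^-\rangle\in\mathbb{K}^+$ iff $\langle W,R^+\rangle\models F(R^+)$, and the same condition says exactly that $\langle W,R^-,R^+\rangle\in\mathbb{K}^-$; thus $\mathfrak{F}\in\mathbb{K}^+$ iff $\mathfrak{F}^*\in\mathbb{K}^-$. Now suppose, towards a contradiction, that $\mathbb{K}^+$ were $\birelKGsquare$-definable. By the previous paragraph it would be swap-closed, and combining $\mathfrak{F}\in\mathbb{K}^+\Leftrightarrow\mathfrak{F}^*\in\mathbb{K}^+$ with $\mathfrak{F}\in\mathbb{K}^+\Leftrightarrow\mathfrak{F}^*\in\mathbb{K}^-$ yields $\mathbb{K}^+=\mathbb{K}^-$ (as $\mathfrak{F}^*$ ranges over all frames).

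Finally, I would refute $\mathbb{K}^+=\mathbb{K}^-$. We may assume $F$ is non-trivial, since otherwise $\mathbb{K}^+=\mathbb{K}^\pm$ and the class is already handled by Corollary~\ref{cor:definabilitypreservation}. Non-triviality gives a domain $W$ carrying a relation $S$ with $\langle W,S\rangle\models F$ and a relation $T$ with $\langle W,T\rangle\not\models F$; then $\langle W,S,T\rangle\in\mathbb{K}^+\setminus\mathbb{K}^-$, contradicting $\mathbb{K}^+=\mathbb{K}^-$. The main obstacle is this last step: one must guarantee that a single underlying set simultaneously carries a relation satisfying $F$ and one violating it. For the standard conditions (reflexivity, transitivity, symmetry, seriality, and the like) this is routine, and it is exactly here that the assumption that $F$ genuinely constrains the relation is used.
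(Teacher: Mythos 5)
Your proof is correct and takes essentially the same route as the paper: the paper likewise invokes Lemma~\ref{lemma:splitconflation} to conclude that validity on a crisp frame is invariant under swapping $R^+$ and $R^-$, picks a frame $\mathfrak{F}\in\mathbb{K}$ whose $R^-$ violates $F$ (your witness in $\mathbb{K}^+\setminus\mathbb{K}^-$), and derives the contradiction from the fact that its swap $\mathfrak{F}^*=\langle W,R^-,R^+\rangle$ validates the same formulas yet lies outside $\mathbb{K}$. The differences are purely presentational --- the paper argues directly on one witness frame instead of first establishing swap-closure of all definable classes, and it leaves implicit the non-triviality assumption you state explicitly (which, note, already dissolves the ``obstacle'' you worry about at the end, since a frame in $\mathbb{K}^+\setminus\mathbb{K}^\pm$ by itself supplies a single domain carrying both a satisfying and a violating relation).
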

\begin{proof}
We reason for contradiction. Assume that $\phi$ defines $\mathbb{K}$, and let $\mathfrak{F}\in\mathbb{K}$ with $\mathfrak{F}=\langle W,R^+,R^-\rangle$ s.t.\ $F(R^-)$ does not hold of $\mathfrak{F}$. Now denote $\mathfrak{F}^*=\langle W,R^-,R^+\rangle$. Clearly, $\mathfrak{F}^*\notin\mathbb{K}$. However, by Lemma~\ref{lemma:splitconflation}, we have that $\mathfrak{F}^*\models\phi$, i.e., $\phi$ does not define $\mathbb{K}$. A contradiction.
\end{proof}

As of now, we have discussed the definability of \emph{different classes of crisp frames}. $\Box(p\vee q)\rightarrow(\Box p\vee\lozenge q)$ defines crisp frames in $\KG$~\cite{RodriguezVidal2021} and $\triangle\Box p\rightarrow\Box\triangle p$ in $\KbiG$~\cite{BilkovaFrittellaKozhemiachenko2022IGPLarxiv}\footnote{It makes sense to speak of definability of crisp frames in $\KG$ and $\KbiG$ separately since $\triangle\Box p\rightarrow\Box\triangle p$ is essential in the completeness proof.}, however, $\birelKGsquare$ does not extend $\KbiG$ (nor $\KG$), whence the definability of the \emph{class of all crisp frames} is not immediate.

\begin{theorem}\label{theorem:crispdefinition}
Let $\mathfrak{F}=\langle W,R^+,R^-\rangle$.
\begin{enumerate}[noitemsep,topsep=2pt]
\item $R^+$ is \emph{crisp} iff $\mathfrak{F}\models\triangle\Box p\rightarrow\Box\triangle p$.
\item $R^-$ is \emph{crisp} iff $\mathfrak{F}\models\lozenge{\sim\sim}p\rightarrow{\sim\sim}\lozenge p$.
\end{enumerate}
\end{theorem}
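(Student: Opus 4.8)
The plan is to reduce the two-sided validity requirement $v(\cdot,w)=(1,0)$ for each formula to a pair of one-sided conditions, one on $v_1$ and one on $v_2$, and then to read off each condition as a $\KbiG$ statement. The key preliminary observation is a \emph{duality of the derived connectives}: while the $v_1$-clauses reproduce $\KbiG$ over $R^+$ verbatim (Remark~\ref{rem:v1equivalence}), the $v_2$-clauses reproduce $\KbiG$ over $R^-$ with $\Box$ and $\lozenge$ interchanged and with $\triangle$ and ${\sim\sim}$ interchanged. Concretely, unfolding the definitions of $\triangle$, $\sim$, $\coimplies$ and $\mathbf{0},\mathbf{1}$ one checks that $v_1(\triangle\phi,w)=\triangle_\mathsf{G}v_1(\phi,w)$ and $v_1({\sim\sim}\phi,w)={\sim_\mathsf{G}}{\sim_\mathsf{G}}v_1(\phi,w)$, whereas on the support of falsity the roles swap: $v_2(\triangle\phi,w)={\sim_\mathsf{G}}{\sim_\mathsf{G}}v_2(\phi,w)$ and $v_2({\sim\sim}\phi,w)=\triangle_\mathsf{G}v_2(\phi,w)$; moreover $v_2(\Box\phi,w)$ is computed as a $\KbiG$-$\lozenge$ over $R^-$ and $v_2(\lozenge\phi,w)$ as a $\KbiG$-$\Box$ over $R^-$. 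I would establish these identities first, since they drive everything else.

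For item~1, I would compute the two components of $\triangle\Box p\rightarrow\Box\triangle p$ separately. Since $v_1(\alpha\rightarrow\beta,w)=v_1(\alpha,w)\rightarrow_\mathsf{G}v_1(\beta,w)$ and, by the identities above, $v_1(\triangle\Box p,w)$ and $v_1(\Box\triangle p,w)$ are exactly the $\KbiG$-values of $\triangle\Box p$ and $\Box\triangle p$ on $\langle W,R^+\rangle$ under the valuation $v_1(p,\cdot)$, the $v_1$-component equals $1$ for all valuations iff $\langle W,R^+\rangle$ is a $\KbiG$-model of $\triangle\Box p\rightarrow\Box\triangle p$, which by~\cite{BilkovaFrittellaKozhemiachenko2022IGPLarxiv} holds iff $R^+$ is crisp. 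For the $v_2$-component, $v_2(\alpha\rightarrow\beta,w)=v_2(\beta,w)\coimplies_\mathsf{G}v_2(\alpha,w)$ vanishes iff $v_2(\Box\triangle p,w)\le v_2(\triangle\Box p,w)$; translating through the duality identities, this inequality is precisely the validity of $\lozenge{\sim\sim}p\rightarrow{\sim\sim}\lozenge p$ in $\KbiG$ over $\langle W,R^-\rangle$, which is a $\KbiG$ theorem (recall the proof of Theorem~\ref{theorem:noextension}) and therefore holds on every frame. Hence the $v_2$-component is identically $0$, and $\mathfrak{F}\models\triangle\Box p\rightarrow\Box\triangle p$ reduces to the $v_1$-condition, i.e.\ to crispness of $R^+$.

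Item~2 is the mirror image. Here the $v_1$-component of $\lozenge{\sim\sim}p\rightarrow{\sim\sim}\lozenge p$ is $1$ iff this formula is $\KbiG$-valid over $\langle W,R^+\rangle$, which again holds on every frame by the same $\KbiG$ theorem, so it is always $1$ regardless of $R^+$. The $v_2$-component $v_2({\sim\sim}\lozenge p,w)\coimplies_\mathsf{G}v_2(\lozenge{\sim\sim}p,w)$ vanishes iff $v_2({\sim\sim}\lozenge p,w)\le v_2(\lozenge{\sim\sim}p,w)$; using $v_2({\sim\sim}\phi,w)=\triangle_\mathsf{G}v_2(\phi,w)$ together with the fact that $v_2(\lozenge\cdot,w)$ is the $\KbiG$-$\Box$ over $R^-$, this inequality is exactly the validity of $\triangle\Box p\rightarrow\Box\triangle p$ in $\KbiG$ over $\langle W,R^-\rangle$, i.e.\ crispness of $R^-$ by~\cite{BilkovaFrittellaKozhemiachenko2022IGPLarxiv}. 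Combining the two components yields the claim.

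The main obstacle is not the frame-definability content itself --- that is imported wholesale from the $\KbiG$ results --- but the bookkeeping that isolates it: one must carefully compute the $v_1$- and $v_2$-semantics of the defined connectives $\triangle$, $\sim$, $\coimplies$ to pin down the operator duality, and then keep straight which relation ($R^+$ or $R^-$) each component sees. Once these identities are in place, each of the four sub-conditions is either the $\KbiG$ crispness formula $\triangle\Box p\rightarrow\Box\triangle p$ (applied to $R^+$ in item~1 and to $R^-$ in item~2) or the universally valid $\KbiG$ theorem $\lozenge{\sim\sim}p\rightarrow{\sim\sim}\lozenge p$, and both equivalences fall out.
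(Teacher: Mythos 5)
Your proposal is correct, but it is organised quite differently from the paper's proof. The paper argues directly: for the forward direction it runs the semantic computation for both supports (showing, e.g., that crispness of $R^+$ forces $v_1(\triangle\Box p,w)=1\Rightarrow v_1(\Box\triangle p,w)=1$, and that the $v_2$-inequality $v_2(\Box\triangle p,w)\leq v_2(\triangle\Box p,w)$ holds with \emph{no} assumption on $R^-$), and for the converse it exhibits explicit countermodels (e.g.\ $wR^+w'=x\in(0,1)$, $v(p,w')=(x,0)$, $v(p,\cdot)=(1,0)$ elsewhere, giving $v(\triangle\Box p,w)=(1,0)$ and $v(\Box\triangle p,w)=(0,0)$). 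You instead make the $v_2$-duality explicit as a standalone lemma --- under $v_2$, the pairs $\Box/\lozenge$ and $\triangle/{\sim\sim}$ swap roles and the relation seen is $R^-$ --- and then discharge each of the four one-sided conditions by citation: the crispness content comes from the $\KbiG$ definability result of~\cite{BilkovaFrittellaKozhemiachenko2022IGPLarxiv} (which the paper itself states as known just before the theorem), and the vacuously true component comes from the validity of $\lozenge{\sim\sim}p\rightarrow{\sim\sim}\lozenge p$ on all fuzzy frames, recalled in the proof of Theorem~\ref{theorem:noextension}. Your route is more modular and explains \emph{why} the two formulas of the theorem mirror each other (each is the $v_2$-dual of the other, read over the other relation), whereas the paper's route is self-contained: it does not lean on the external $\KbiG$ fuzzy-frame definability result and it produces concrete falsifying valuations, which your argument only inherits implicitly through the citation. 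One small point of care: your identities tacitly use the intended $\inf$-reading of $v_2(\lozenge\phi,w)$ (as a $\KbiG$-$\Box$ over $R^-$); Definition~\ref{def:semantics} misprints this clause with a $\sup$, but the paper's own computations (in Theorems~\ref{theorem:noextension} and~\ref{theorem:crispdefinition}) confirm the reading you adopted, so your translation is faithful to the intended semantics.
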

\begin{proof}
Note, first of all, that $v_i(\triangle\phi,w),v_i({\sim\sim}\phi,w)\in\{0,1\}$ for every $\phi$ and $i\in\{1,2\}$. Now let $R^+$ be crisp. We have
\begin{align*}
v_1(\triangle\Box p,w)=1&\text{ then }v_1(\Box p,w)=1\\
&\text{ then }\inf\{v_1(p,w'):wR^+w'\}=1\tag{$R^+$ is crisp}\\
&\text{ then }\inf\{v_1(\triangle p,w'):wR^+w'\}=1\\
&\text{ then }v_1(\Box\triangle p,w)=1
\end{align*}
\begin{align*}
v_2(\triangle\Box p,w)=0&\text{ then }v_2(\Box p,w)=0\\
&\text{ then }\sup\limits_{w'\in W}\{wR^-w'\wedge_\mathsf{G}v_2(p,w')\}=0\\
&\text{ then }\sup\limits_{w'\in W}\{wR^-w'\wedge_\mathsf{G}v_2(\triangle p,w')\}=0\\
&\text{ then }v_2(\Box\triangle p,w)=0
\end{align*}
For the converse, let $wR^+w'=x$ with $x\in(0,1)$. We set $v(p,w')=(x,0)$ and $v(p,w'')=(1,0)$ elsewhere. It is clear that $v(\triangle\Box p,w)=(1,0)$ but $v(\Box\triangle p,w)=(0,0)$. Thus, $v(\triangle\Box p\rightarrow\Box\triangle p,w)\neq(1,0)$, as required.

The case of $R^-$ is considered dually. For crisp $R^-$, we have
\begin{align*}
v_1(\lozenge{\sim\sim}p,w)=1&\text{ then }\sup\limits_{w'\in W}\{wR^-w'\wedge_\mathsf{G}v_1({\sim\sim}p,w')\}=1\\
&\text{ then }\sup\limits_{w'\in W}\{wR^-w'\wedge_\mathsf{G}v_2(p,w')\}>0\\
&\text{ then }v_1(\lozenge p,w)>0\\
&\text{ then }v_1({\sim\sim}\lozenge p,w)=1
\end{align*}
\begin{align*}
v_2(\lozenge{\sim\sim}p,w)=0&\text{ then }\inf\{v_2({\sim\sim}p,w'):wR^-w'\}=0\tag{$R^-$ is crisp}\\
&\text{ then }\inf\{v_2(p,w'):wR^-w'\}<1\\
&\text{ then }v_2(\lozenge p,w)<1\\
&\text{ then }v_2({\sim\sim}\lozenge p,w)=0
\end{align*}
For the converse, let $wR^-w'=y$ with $y\in(0,1)$. We set $v(p,w')=(1,y)$ and $v(p,w'')=(1,0)$ elsewhere. It is clear that $v(\lozenge{\sim\sim} p,w)=(1,0)$ but $v({\sim\sim}\lozenge p,w)=(1,1)$. Thus, $v(\lozenge{\sim\sim}p\rightarrow{\sim\sim}\lozenge p,w)\neq(1,0)$, as required.
\end{proof}

The above statement highlights an important contrast between crisp and fuzzy bi-relational frames: while it is impossible to define $R^+$ and $R^-$ separately in crisp frames, we can define a class of frames where only $R^+$ (or only $R^-$) is crisp. It is now instructive to ask whether we can define some relations \emph{between} $R^+$ and $R^-$. In particular, we show that
\begin{enumerate}[noitemsep,topsep=2pt]
\item frames where there are $w$ and $w'$ s.t.\ $wR^+w',wR^-w'>0$ are not definable;
\item mono-relational frames (both crisp and fuzzy) are definable.
\end{enumerate}

\begin{definition}\label{def:splitting}
Let $\mathfrak{M}=\langle W,R^+,R^-,v_1,v_2\rangle$ be a~model. We define its \emph{splitting} to be $\mathfrak{M}^\mathsf{s}=\langle W^\mathsf{s},(R^+)^\mathsf{s},(R^-)^\mathsf{s},v_1^\mathsf{s},v_2^\mathsf{s}\rangle$ with
\begin{itemize}[noitemsep,topsep=2pt]
\item $W^\mathsf{s}=\{\ulcorner wSw'\urcorner,\ulcorner\varnothing Su\urcorner:wSw',S\in\{R^+,R^-\},\neg\exists t:tSu>0\}$;
\item $\ulcorner uSu'\urcorner S^\mathsf{s}\ulcorner wSw'\urcorner=u'Sw'$ with $S\in R^+,R^-$;
\item for every $\ulcorner wSw'\urcorner$ and $i\in\{1,2\}$, $v^\mathsf{s}_i(p,\ulcorner wSw'\urcorner)=v_i(p,w')$.
\end{itemize}

We will further denote
\begin{align*}
\llbracket w\rrbracket&=\{\ulcorner\varnothing Sw\urcorner,\ulcorner uSw\urcorner:S\in\{R^+,R^-\}\}
\end{align*}
\end{definition}
It is clear that there are no $u,u'\!\in\!W^\mathsf{s}$ s.t.\ $u(R^+)^\mathsf{s}u',u(R^-)^\mathsf{s}u'>0$. We will further call such models \emph{split models}.

The next statement is easy to prove.
\begin{lemma}\label{lemma:splitequivalence}
Let $\mathfrak{M}=\langle W,R^+,R^-,v_1,v_2\rangle$ be a~model and $\mathfrak{M}^\mathsf{s}$ be its splitting. Then $v_i(\phi,w)=v^\mathsf{s}_i(\phi,w^\mathsf{s})$ for every $\phi\in\bimodalLsquare$ and $w^\mathsf{s}\in\llbracket w\rrbracket$.
\end{lemma}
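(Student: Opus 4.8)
The plan is a routine structural induction on $\phi$, carried out simultaneously for $i=1$ and $i=2$, whose content is that every node of $\mathfrak{M}^\mathsf{s}$ lying in some $\llbracket w\rrbracket$ carries exactly the pair of values that $w$ carries in $\mathfrak{M}$. The base case is immediate: if $w^\mathsf{s}=\ulcorner uSw\urcorner$ or $w^\mathsf{s}=\ulcorner\varnothing Sw\urcorner$ lies in $\llbracket w\rrbracket$, then its target world is $w$, so by the definition of the splitting $v^\mathsf{s}_i(p,w^\mathsf{s})=v_i(p,w)$ for every variable $p$. The connectives $\neg$, $\wedge$, and $\rightarrow$ are evaluated by the same truth functions pointwise in both models (negation swaps $v_1,v_2$, conjunction uses $\wedge_\mathsf{G}$/$\vee_\mathsf{G}$, implication uses $\rightarrow_\mathsf{G}$/$\coimplies_\mathsf{G}$), so these cases follow directly by applying the induction hypothesis to the immediate subformulas at the \emph{same} node $w^\mathsf{s}$.

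The modal cases are the only ones requiring thought. Consider $v_1(\Box\phi,w)=\inf_{x'\in W}\{wR^+x'\rightarrow_\mathsf{G}v_1(\phi,x')\}$ and $v^\mathsf{s}_1(\Box\phi,w^\mathsf{s})=\inf\{w^\mathsf{s}(R^+)^\mathsf{s}u'\rightarrow_\mathsf{G}v^\mathsf{s}_1(\phi,u')\}$ for a node $w^\mathsf{s}\in\llbracket w\rrbracket$, i.e.\ one with target $w$. By the definition of $(R^+)^\mathsf{s}$, the $(R^+)^\mathsf{s}$-successors of $w^\mathsf{s}$ are exactly the $R^+$-edge nodes $\ulcorner xR^+x'\urcorner$, and the edge degree $w^\mathsf{s}(R^+)^\mathsf{s}\ulcorner xR^+x'\urcorner$ equals $wR^+x'$; moreover each such node lies in $\llbracket x'\rrbracket$, so the induction hypothesis gives $v^\mathsf{s}_1(\phi,\ulcorner xR^+x'\urcorner)=v_1(\phi,x')$. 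Hence the family of pairs (degree, subformula value) ranged over in $\mathfrak{M}^\mathsf{s}$ coincides, up to repeated entries and entries of degree $0$, with the family $\{(wR^+x',v_1(\phi,x')):x'\in W\}$ ranged over in $\mathfrak{M}$. Since $0\rightarrow_\mathsf{G}a=1$ is the top element and repetitions do not change an infimum, the two infima agree. The cases $v_2(\Box\phi)$, $v_1(\lozenge\phi)$, and $v_2(\lozenge\phi)$ are verbatim the same after substituting $R^-$ for $R^+$ where the semantics prescribes, replacing $\inf$ by $\sup$ and $\rightarrow_\mathsf{G}$ by $\wedge_\mathsf{G}$, and using that $0$ is absorbed by $\wedge_\mathsf{G}$ so that degree-$0$ entries vanish from a supremum.

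I expect no genuine obstacle, and indeed the statement is flagged as easy; the whole of the work is the bookkeeping in the modal step, which has three parts: (i) every original successor $x'$ with $wR^+x'>0$ is represented by at least the node $\ulcorner wR^+x'\urcorner$, carrying the correct degree $wR^+x'$ and, by the induction hypothesis, the correct value; (ii) several edge nodes may share a target $x'$, but they contribute identical pairs and so do not perturb the infimum or supremum; and (iii) worlds that are not $R^+$-successors of $w$ contribute the identity element of the relevant operation in both models. Finally, one records that the successors of a node depend only on its target world, so a root node $\ulcorner\varnothing Sw\urcorner$ behaves exactly like an edge node with target $w$; this is precisely what makes the common value independent of the choice of element of $\llbracket w\rrbracket$, as the statement demands.
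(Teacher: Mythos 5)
Your proof is correct and follows essentially the same route as the paper's: a structural induction whose modal case rests on the two observations that relation degrees in the splitting depend only on the target worlds of nodes, and that each edge node $\ulcorner xSx'\urcorner$ lies in $\llbracket x'\rrbracket$, so the induction hypothesis applies to the successors. Your write-up is in fact slightly more explicit than the paper's about the bookkeeping (repeated targets, degree-zero entries, and the root nodes $\ulcorner\varnothing Sw\urcorner$), which the paper's proof leaves implicit.
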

\begin{proof}
We proceed by induction. The basis case of propositional variables holds by the construction of $\mathfrak{M}^\mathsf{s}$. The cases of propositional connectives are straightforward. We consider the case of $\phi=\Box\chi$ (the $\lozenge$ case can be considered dually).

Let $\ulcorner uR^+w\urcorner\in\llbracket w\rrbracket$ be arbitrary. We have
\begin{align*}
v^\mathsf{s}_1(\Box\chi,\ulcorner uR^+w\urcorner)&=\inf\limits_{\ulcorner wR^+w'\urcorner\in W^\mathsf{s}}\{\ulcorner uR^+w\urcorner(R^+)^\mathsf{s}\ulcorner wR^+w'\urcorner\rightarrow_\mathsf{G}v^\mathsf{s}_1(\chi,\ulcorner wR^+w'\urcorner)\}\\
&=\inf\limits_{w'\in W}\{wR^+w'\rightarrow_{\mathsf{G}}v_1(\chi,w')\}\tag{by IH}\\
&=v_1(\Box\chi,w)
\end{align*}
\begin{align*}
v^\mathsf{s}_2(\Box\chi,\ulcorner uR^+w\urcorner)&=\sup\limits_{\ulcorner wR^-w'\urcorner\in W^\mathsf{s}}\{\ulcorner uR^-w\urcorner(R^-)^\mathsf{s}\ulcorner wR^-w'\urcorner\wedge_\mathsf{G}v^\mathsf{s}_2(\chi,\ulcorner wR^-w'\urcorner)\}\\
&=\sup\limits_{w'\in W}\{wR^-w'\wedge_\mathsf{G}v_2(\chi,w')\}\tag{by IH}\\
&=v_2(\Box\chi,w)
\end{align*}

Note that we could apply the induction hypothesis because $\ulcorner uR^+w\urcorner\in\llbracket w\rrbracket$, $\ulcorner wR^+w'\urcorner\!\in\!\llbracket w'\rrbracket$, and the values of $wR^+w'$ (resp., $wR^-w'$) become the values of $\ulcorner uR^+w\urcorner (R^+)^\mathsf{s}\ulcorner wR^+w'\urcorner$ (resp., $\ulcorner uR^-w\urcorner (R^-)^\mathsf{s}\ulcorner wR^-w'\urcorner$). The result follows.
\end{proof}

The following corollary is now immediate.
\begin{corollary}\label{cor:nointersection}
The class of (crisp or fuzzy) frames $\mathfrak{F}=\langle W,R^+,R^-\rangle$ s.t.\ both $wR^+w'>0$ and $wR^-w'>0$ for some $w,w'\in\mathfrak{F}$ is not definable in $\birelKGsquare$.
\end{corollary}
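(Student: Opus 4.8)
The plan is to argue by contradiction, treating the splitting of Definition~\ref{def:splitting} as a transformation that preserves formula values while sending membership in $\mathbb{K}$ to non-membership. Suppose some $\phi\in\bimodalLsquare$ defines the class $\mathbb{K}$ of frames carrying a world-pair linked by both relations. As a witness I would take the simplest in-$\mathbb{K}$ frame, namely the two-point frame $\mathfrak{F}=\langle\{w,w'\},R^+,R^-\rangle$ with $wR^+w'=wR^-w'>0$ and no other arrows, so that $\mathfrak{F}\in\mathbb{K}$ and hence $\mathfrak{F}\models\phi$. By the remark following Definition~\ref{def:splitting}, the splitting $\mathfrak{F}^\mathsf{s}$ has no pair of points related by both $(R^+)^\mathsf{s}$ and $(R^-)^\mathsf{s}$, so $\mathfrak{F}^\mathsf{s}\notin\mathbb{K}$; if $\phi$ defines $\mathbb{K}$, this forces $\mathfrak{F}^\mathsf{s}\not\models\phi$. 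The contradiction I am aiming for is that $\phi$ should nonetheless stay valid on $\mathfrak{F}^\mathsf{s}$.

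The bridge is Lemma~\ref{lemma:splitequivalence}: for any model $\mathfrak{M}$ on $\mathfrak{F}$ and its splitting $\mathfrak{M}^\mathsf{s}$ one has $v_i(\chi,w)=v^\mathsf{s}_i(\chi,w^\mathsf{s})$ for every $\chi$ and every $w^\mathsf{s}\in\llbracket w\rrbracket$. The intended reading at the level of frames is that $\mathfrak{F}$ and its splitting validate exactly the same formulas, whence $\mathfrak{F}\models\phi$ would yield $\mathfrak{F}^\mathsf{s}\models\phi$, contradicting $\mathfrak{F}^\mathsf{s}\notin\mathbb{K}$. Phrased generally, the claim is that no formula can distinguish an in-$\mathbb{K}$ frame from its out-of-$\mathbb{K}$ splitting, which is precisely non-definability, so a single witness suffices once the frame-level transfer is secured.

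The step I expect to be the genuine obstacle is exactly this passage from the model-level identity of Lemma~\ref{lemma:splitequivalence} to a frame-level equivalence of valid formulas. The Lemma pairs each valuation $v$ on $\mathfrak{F}$ only with its induced splitting $v^\mathsf{s}$, so what it hands over for free is the inclusion $\{\chi:\mathfrak{F}^\mathsf{s}\models\chi\}\subseteq\{\chi:\mathfrak{F}\models\chi\}$, i.e.\ every refutation found on $\mathfrak{F}$ is transported to $\mathfrak{F}^\mathsf{s}$; this direction alone does \emph{not} close the argument. What is actually needed is the converse — that $\phi$'s validity descends to $\mathfrak{F}^\mathsf{s}$, i.e.\ that no valuation on $\mathfrak{F}^\mathsf{s}$ outside the image of $v\mapsto v^\mathsf{s}$ refutes $\phi$. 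To handle this I would try to exhibit the projection $\ulcorner wSw'\urcorner\mapsto w'$, $\ulcorner\varnothing Sw\urcorner\mapsto w$ as a relation-respecting surjection and argue that truth at a point of $\mathfrak{F}^\mathsf{s}$ depends only on its generated subframe, so that the additional freedom in valuations on $\mathfrak{F}^\mathsf{s}$ (which may assign different values to the $R^+$- and $R^-$-copies sitting over the same original world) cannot manufacture a refutation of $\phi$ that is absent from $\mathfrak{F}$. Establishing this converse inclusion rigorously, rather than reading it off Lemma~\ref{lemma:splitequivalence} directly, is where the real work of the corollary lies.
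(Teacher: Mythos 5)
Your diagnosis of the obstacle is exactly right: Lemma~\ref{lemma:splitequivalence} pairs each valuation $v$ on $\mathfrak{F}$ only with its induced valuation $v^\mathsf{s}$, so all it gives is $\{\chi:\mathfrak{F}^\mathsf{s}\models\chi\}\subseteq\{\chi:\mathfrak{F}\models\chi\}$. The fatal problem is that the step you defer as ``the real work'' --- that validity of $\phi$ descends from $\mathfrak{F}$ to $\mathfrak{F}^\mathsf{s}$ --- is not merely hard, it is \emph{false}, and the paper itself proves this. Your witness $\mathfrak{F}$ is mono-relational ($R^+=R^-$ on the two points), so by Theorem~\ref{theorem:1relationdefinable} it validates $\Box p\leftrightarrow\neg\lozenge\neg p$. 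Its splitting does not: the tokens $\ulcorner wR^+w'\urcorner$ and $\ulcorner wR^-w'\urcorner$ are distinct states, each reachable through exactly one of $(R^+)^\mathsf{s}$, $(R^-)^\mathsf{s}$, so $(R^+)^\mathsf{s}\neq(R^-)^\mathsf{s}$, and the same theorem yields a refuting valuation --- concretely, give the two copies of $w'$ the values $\left(\frac{1}{2},0\right)$ and $(1,0)$; then at a token of $w$ one gets $v_1(\Box p,\cdot)=\frac{1}{2}$ while $v_1(\neg\lozenge\neg p,\cdot)=1$. So valuations that separate the copies \emph{do} manufacture refutations absent from $\mathfrak{F}$; destroying exactly such validities is what the splitting construction is for, and no projection or generated-subframe argument can rule it out (the generated subframe of a token of $w$ still contains both separated copies). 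Your plan, as formulated, cannot be completed for this witness or by this mechanism.

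The contradiction has to be produced in the opposite direction: transport a \emph{refutation} into the class $\mathbb{K}$, rather than validity out of it. That is also the sound reading of the paper's one-line appeal to the lemma: a refuting model on an arbitrary frame transfers to its splitting, which is never in $\mathbb{K}$ --- taken literally this establishes undefinability of the \emph{complement} class (split frames). For the class exactly as stated, argue as follows. Suppose $\phi$ defines $\mathbb{K}$, and let $\mathfrak{G}$ be a single irreflexive point $z$; since $\mathfrak{G}\notin\mathbb{K}$, some valuation $u$ refutes $\phi$ at $z$. Add two fresh points $a,b$ with $aR^+b=aR^-b=1$ and no other arrows, obtaining $\mathfrak{H}\in\mathbb{K}$, so $\mathfrak{H}\models\phi$. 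Extend $u$ to $a,b$ arbitrarily: since $z$ is related with degree $0$ to every state in both frames, the modal clauses of Definition~\ref{def:semantics} evaluate identically at $z$ in $\mathfrak{G}$ and in $\mathfrak{H}$ (every $\Box\psi$ gets $(1,0)$ and every $\lozenge\psi$ gets $(0,1)$ there), so by induction $\phi$ keeps its refuting value at $z$ in $\mathfrak{H}$ --- contradiction. Note that this argument uses only the locality of the semantics and needs no splitting at all; the splitting lemma buys the complementary statement, not this one.
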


Let us now prove the definability of mono-relational frames.
\begin{theorem}\label{theorem:1relationdefinable}
Let $\mathfrak{F}=\langle W,R^+,R^-\rangle$. Then $\mathfrak{F}\models\Box p\leftrightarrow\neg\lozenge\neg p$ iff $R^+=R^-$.
\end{theorem}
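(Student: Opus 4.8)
The plan is to compute the two Belnapian components of $\Box p$ and of $\neg\lozenge\neg p$ at an arbitrary state, to reduce validity of the biconditional to a pair of pointwise equalities (one for the support of truth, one for the support of falsity), and then to refute the nontrivial direction with a single characteristic valuation. Concretely, I would first unfold $v(\neg\lozenge\neg p,w)$ using that $\neg$ interchanges $v_1$ and $v_2$ together with the modal clauses, obtaining
\[
v_1(\neg\lozenge\neg p,w)=\inf_{w'}\{wR^-w'\rightarrow_\mathsf{G}v_1(p,w')\},\qquad v_2(\neg\lozenge\neg p,w)=\sup_{w'}\{wR^+w'\wedge_\mathsf{G}v_2(p,w')\}.
\]
These are to be compared with $v_1(\Box p,w)=\inf_{w'}\{wR^+w'\rightarrow_\mathsf{G}v_1(p,w')\}$ and $v_2(\Box p,w)=\sup_{w'}\{wR^-w'\wedge_\mathsf{G}v_2(p,w')\}$; note that $\Box$ and $\neg\lozenge\neg$ have, so to speak, swapped the roles of $R^+$ and $R^-$.

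Next I would record that, writing $\chi\leftrightarrow\chi'$ for $(\chi\rightarrow\chi')\wedge(\chi'\rightarrow\chi)$, a formula $\chi\leftrightarrow\chi'$ has value $(1,0)$ at $w$ iff $v_1(\chi,w)=v_1(\chi',w)$ and $v_2(\chi,w)=v_2(\chi',w)$: the first equivalence is the standard G\"odel fact that $(x\rightarrow_\mathsf{G}y)\wedge_\mathsf{G}(y\rightarrow_\mathsf{G}x)=1$ iff $x=y$, and the second is its $\coimplies_\mathsf{G}$-dual, since $v_2(\chi\leftrightarrow\chi')=(v_2(\chi')\coimplies_\mathsf{G}v_2(\chi))\vee_\mathsf{G}(v_2(\chi)\coimplies_\mathsf{G}v_2(\chi'))$ vanishes exactly when $v_2(\chi,w)=v_2(\chi',w)$. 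Hence $\mathfrak{F}\models\Box p\leftrightarrow\neg\lozenge\neg p$ is equivalent to requiring, for all valuations and all $w$, both
\[
\inf_{w'}\{wR^+w'\rightarrow_\mathsf{G}v_1(p,w')\}=\inf_{w'}\{wR^-w'\rightarrow_\mathsf{G}v_1(p,w')\}
\]
and
\[
\sup_{w'}\{wR^-w'\wedge_\mathsf{G}v_2(p,w')\}=\sup_{w'}\{wR^+w'\wedge_\mathsf{G}v_2(p,w')\}.
\]

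The $(\Leftarrow)$ direction is then immediate: if $R^+=R^-$ both displayed equalities are literal identities, so the biconditional is valid. For $(\Rightarrow)$ I would use the contrapositive. Assume $R^+\neq R^-$, fix $w,w_0$ with $wR^+w_0\neq wR^-w_0$, and take the characteristic valuation $v_2(p,w_0)=1$ and $v_2(p,u)=0$ for $u\neq w_0$. Since every term except the one at $w_0$ is killed by $\wedge_\mathsf{G}0$, the two suprema collapse to $wR^-w_0$ and $wR^+w_0$, which differ; hence the $v_2$-equality fails and $v_2(\Box p\leftrightarrow\neg\lozenge\neg p,w)>0$, so the biconditional is not valid on $\mathfrak{F}$. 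As the witnessing valuation only uses the values $0$ and $1$, the same argument serves both crisp and fuzzy frames.

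The argument is short and has no deep obstacle; the step demanding the most care will be the correct unfolding of $v(\neg\lozenge\neg p,w)$ — especially the falsity clause of $\lozenge$ — and the reduction of the biconditional to two independent equalities, since one must keep in mind that validity here means value exactly $(1,0)$ and therefore constrains $v_1$ and $v_2$ separately. Once this reduction is in place, a single characteristic valuation already breaks the $v_2$-equality whenever $R^+\neq R^-$, so the converse needs neither the $v_1$-equality nor any case split on the direction of the inequality.
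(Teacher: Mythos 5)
Your proof is correct and, at the skeleton level, follows the same route as the paper: unfold both components of $\neg\lozenge\neg p$ (with the $\inf$-clause for $v_2(\lozenge\cdot,\cdot)$, which is what the paper's own computations use despite the $\sup$ misprint in Definition~\ref{def:semantics}), observe that $\neg\lozenge\neg$ is ``$\Box$ with $R^+$ and $R^-$ swapped'', obtain the right-to-left direction as a literal identity, and refute the left-to-right direction by a countermodel when $R^+\neq R^-$. The genuine difference is the countermodel, and here your choice matters: the paper takes $wR^+w'=x>y=wR^-w'$, sets $v(p,w')=(x,y)$ and $(1,0)$ elsewhere, and claims $v(\Box p,w)=(1,0)$ while $v(\neg\lozenge\neg p,w)=(y,x)$; but computing with the semantic clauses (the states other than $w'$ contribute $1$ to the infima and $0$ to the suprema), that valuation actually yields
\[
v(\Box p,w)=\bigl(x\rightarrow_\mathsf{G}x,\;\min(y,y)\bigr)=(1,y)
\qquad\text{and}\qquad
v(\neg\lozenge\neg p,w)=\bigl(y\rightarrow_\mathsf{G}x,\;\min(x,y)\bigr)=(1,y),
\]
so the two formulas are \emph{not} separated: the truncation by $\wedge_\mathsf{G}$ and the fact that $y\rightarrow_\mathsf{G}x=1$ for $y<x$ wash out the difference. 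Your characteristic valuation $v_2(p,w_0)=1$, $v_2(p,u)=0$ for $u\neq w_0$ avoids exactly this pitfall: the two falsity suprema collapse to the bare degrees $wR^-w_0$ and $wR^+w_0$, which differ by assumption, so the $v_2$-equality fails and the biconditional is refuted. As a further benefit, your argument needs no w.l.o.g.\ case split on the direction of the inequality --- a step the paper invokes without justification, and which is not obviously available on fuzzy frames, where the swap-and-conflate symmetry of Lemma~\ref{lemma:splitconflation} does not apply. In short: same strategy, explicit where the paper is implicit (the reduction of $\models\chi\leftrightarrow\chi'$ to the two componentwise equalities), but your countermodel is the one that actually works, and it repairs the converse direction of the paper's own proof.
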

\begin{proof}
Let $R^+=R^-$, we have
\begin{align*}
v_1(\neg\lozenge\neg p,w)&=v_2(\lozenge\neg p,w)\\
&=\inf\limits_{w'\in W}\{wRw'\rightarrow_\mathsf{G}v_2(\neg p,w')\}\\
&=\inf\limits_{w'\in W}\{wRw'\rightarrow_\mathsf{G}v_1(p,w')\}\\
&=v_1(\Box p,w)
\end{align*}
$v_2$ can be tackled similarly.

Now let $R^+\neq R^-$, i.e., $wR^+w'=x$ and $wR^-w'=y$ for some $w,w'\in\mathfrak{F}$, and assume w.l.o.g.\ that $x>y$. We define the values of $p$ as follows: $v(p,w'')=(1,0)$ for all $w''\neq w'$ and $v(p,w')=(x,y)$. It is clear that $v(\Box p,w)=(1,0)$ but $v(\neg\lozenge\neg p,w)=(y,x)\neq(1,0)$, as required.
\end{proof}

In the remaining part of the paper, we will be considering (fuzzy) $\birelKGsquare$ over \emph{finitely branching frames}, i.e., frames $\langle W,R^+,R^-\rangle$ where $|R^+(w)|,|R^-(w)|<\aleph_0$ for every $w\in W$. We will denote this logic $\fbbirelKGsquare$. This is a natural restriction for several reasons. First, if we use the logic to formalise reasoning, it makes sense to assume that agents can consider only a finite number of alternatives. This assumption is usually implicit in classical modal logics since they are oftentimes complete w.r.t.\ finitely branching (or even finite) models~\cite{FaginHalpernMosesVardi2003}. In modal expansions of G\"{o}del logic, this is not the case, and thus this assumption has to be forced. Second, infinitely branching frames can have \emph{unwitnessed} models: e.g., it is possible that $v(\lozenge p,w)=(1,0)$ but there are no $w'\in R^+(w)$ and $w''\in R^-(w)$ s.t.\ $v_1(p,w')=1$ and $v_2(p,w'')=0$. This makes the doxastic interpretation of modalities counterintuitive.

We finish the section by showing that fuzzy and crisp\footnote{In fact, the definability of \emph{crisp} finitely branching frames follows from Corollary~\ref{cor:definabilitypreservation} since ${\sim\sim}\Box(p\vee{\sim}p)$ defines finitely branching frames in $\KbiG$~\cite[Proposition~4.3]{BilkovaFrittellaKozhemiachenko2022IGPLarxiv}} finitely branching frames are definable. Note, however, that now we need \emph{two formulas}.
\begin{theorem}\label{theorem:finitebranching}
$\mathfrak{F}$ is finitely branching iff $\mathfrak{F}\models{\sim\sim}\Box(p\!\vee\!{\sim}p)$ and $\mathfrak{F}\models\mathbf{1}\!\coimplies\!\lozenge\neg(p\!\vee\!{\sim}p)$.
\end{theorem}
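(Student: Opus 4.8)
The plan is to let the two formulas decouple the problem: \emph{${\sim\sim}\Box(p\vee{\sim}p)$ will control the branching of $R^+$ through the support of truth $v_1$, while $\mathbf{1}\coimplies\lozenge\neg(p\vee{\sim}p)$ will control the branching of $R^-$ through the support of falsity $v_2$}. The two "off-diagonal" components turn out to be automatically well-behaved as soon as the relevant relation is finite, so each formula needs to refute only the branching of one relation. Throughout write $\chi\coloneqq p\vee{\sim}p$.

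First I would record the arithmetical behaviour of $\chi$: for every model and every world $u$ one has $v_1(\chi,u)>0$ and $v_2(\chi,u)<1$, and moreover $v_1(\chi,u)=v_1(p,u)$ whenever $v_1(p,u)>0$ while $v_2(\chi,u)=v_2(p,u)$ whenever $v_2(p,u)<1$. Hence $v_1(\chi,u)$ can be set to any point of $(0,1]$ and $v_2(\chi,u)$ to any point of $[0,1)$ by choosing $v(p,u)$. Since ${\sim\sim_\mathsf{G}}$ maps positives to $1$ and $0$ to $0$, and $\mathbf{1}$ has value $(1,0)$, the two validity requirements unfold to: $\mathfrak{F}\models{\sim\sim}\Box\chi$ iff for all valuations and all $w$ both $v_1(\Box\chi,w)>0$ and $v_2(\Box\chi,w)<1$; and $\mathfrak{F}\models\mathbf{1}\coimplies\lozenge\neg\chi$ iff for all valuations and all $w$ both $v_1(\lozenge\neg\chi,w)<1$ and $v_2(\lozenge\neg\chi,w)>0$, where I use $v_1(\neg\chi,u)=v_2(\chi,u)$ and $v_2(\neg\chi,u)=v_1(\chi,u)$.

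For the direction from finitely branching to validity, suppose $R^+(w)$ and $R^-(w)$ are finite for every $w$. Then each infimum/supremum defining a modality ranges effectively over a finite successor set, since non-successors contribute $1$ to an $\inf$ of Gödel arrows and $0$ to a $\sup$ of minima; thus every extremum is attained as a finite min/max. Consequently $v_1(\Box\chi,w)$ is a finite minimum of strictly positive numbers, hence $>0$; $v_2(\Box\chi,w)$ is a finite maximum of numbers $<1$, hence $<1$; and dually $v_1(\lozenge\neg\chi,w)<1$ and $v_2(\lozenge\neg\chi,w)>0$. All four conditions hold, so both formulas are valid.

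The converse is the main obstacle, as it must convert infinite branching into a refutation; the subtlety is that every $v_1(\chi,w_n)$ is \emph{forced to be strictly positive}, so no single Gödel arrow can vanish and the infimum can only be driven to $0$ by exploiting all successors simultaneously. If $R^+(w)$ is infinite, I would enumerate $R^+(w)=\{w_n\}$, set $r_n\coloneqq wR^+w_n>0$, choose $v_1(p,w_n)=\epsilon_n$ with $0<\epsilon_n<r_n$ and $\epsilon_n\to0$ (for instance $\epsilon_n=\min(r_n,1/n)/2$), and put $v_1(p,\cdot)=1$ elsewhere; then $v_1(\chi,w_n)=\epsilon_n$, and because $r_n>\epsilon_n$ the arrow collapses, $r_n\rightarrow_\mathsf{G}\epsilon_n=\epsilon_n$, so $v_1(\Box\chi,w)\le\inf_n\epsilon_n=0$ and ${\sim\sim}\Box\chi$ fails at $w$. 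If instead $R^-(w)$ is infinite, the same construction applied to $R^-$ together with $v_2(\neg\chi,w_n)=v_1(\chi,w_n)$ yields $v_2(\lozenge\neg\chi,w)=\inf_n\{wR^-w_n\rightarrow_\mathsf{G}v_1(\chi,w_n)\}=0$, whence $v_2(\mathbf{1}\coimplies\lozenge\neg\chi,w)=0\rightarrow_\mathsf{G}0=1\neq0$ and the second formula fails. Since infinite branching of either relation refutes one of the two formulas, validity of both forces finite branching of $R^+$ and $R^-$, which closes the equivalence.
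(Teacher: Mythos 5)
Your proposal is correct and takes essentially the same route as the paper's own proof: the same key observation that $v_1(p\vee{\sim}p,u)>0$ and $v_2(p\vee{\sim}p,u)<1$ at every world gives validity over finitely branching frames, and the same countermodel construction (successor values $\epsilon_n\to 0$ chosen strictly below the relation weights, with $p$ set to $(1,0)$ elsewhere) refutes the first formula when $R^+(w)$ is infinite and the second when $R^-(w)$ is infinite. The only slip is that \emph{enumerate $R^+(w)=\{w_n\}$} tacitly assumes countability; as in the paper, one should fix a countably infinite subset of $R^+(w)$ (resp.\ $R^-(w)$) and run your construction on it, which your clause assigning $v_1(p,\cdot)=1$ elsewhere already accommodates.
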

\begin{proof}
Observe that $v_1(p\vee{\sim}p,w)\!>\!0$ and $v_2(p\vee{\sim}p,w)\!<\!1$ for every $w\!\in\!\mathfrak{F}$. Since $\mathfrak{F}$ is finitely branching, $\inf\limits_{w'\in W}\{wSw'\rightarrow_{\mathsf{G}}v_1(p\vee{\sim}p,w')\}>0$ and $\sup\limits_{w'\in W}\{wSw'\wedge_{\mathsf{G}}v_2(p\vee{\sim}p,w')\}<1$ for $S\in\{R^+,R^-\}$. Thus, $v_1(\Box(p\vee{\sim}p),w)>0$ and $v_2(\Box(p\vee{\sim}p),w)<1$, whence, $v({\sim\sim}\Box(p\vee{\sim}p),w)=(1,0)$. Likewise, $v_1(\lozenge\neg(p\vee{\sim}p),w)<1$ and $v_2(\lozenge\neg(p\vee{\sim}p),w)>0$, whence $v(\mathbf{1}\coimplies\lozenge\neg(p\vee{\sim}p),w)=(1,0)$.

For the converse, we have two cases: (1) $|R^+(w)|\geq\aleph_0$ or (2) $|R^-(w)|\geq\aleph_0$ for some $w\in\mathfrak{F}$. In the first case, we let $X\subseteq R^+(w)$ be countable and define the value of $p$ as follows: $v(p,w'')=(1,0)$ for every $w''\notin X$ and $v(p,w_i)=\left(wR^+w'\cdot\frac{1}{i},0\right)$. It is clear that $\inf\limits_{w'\in W}\{wR^+w'\rightarrow_\mathsf{G}v_1(p\vee{\sim}p,w')\}=0$, whence $v_1({\sim\sim}\Box(p\vee{\sim}p))=0$ as required.

In the second case, $Y\subseteq R^-(w)$ be countable and define the value of $p$ as follows: $v(p,w'')=(1,0)$ for every $w''\notin Y$ and $v(p,w_i)=\left(wR^-w'\cdot\frac{1}{i},0\right)$. It is clear that $\inf\limits_{w'\in W}\{wR^-w'\rightarrow_\mathsf{G}v_2(\neg(p\vee{\sim}p),w')\}=0$, whence $v_2(\lozenge\neg(p\vee{\sim}p))=0$ and $v_2(\mathbf{1}\coimplies\lozenge\neg(p\vee{\sim}p))=1$ as required.
\end{proof}
\section{Constraint tableaux\label{sec:tableaux}}
Proof systems can be roughly divided into two kinds (cf.~\cite[\S2.4]{Haehnle2001HBPL} and the referenced literature for more details): the internal ones that produce proofs consisting of formulas only and the external ones whose proofs contain not only formulas but other entities as well: explicitly mentioned truth values, states, relations between formulas, etc. The standard examples are Hilbert and Gentzen calculi for the former and tableaux and labelled systems for the latter.

The external calculi usually formalise the semantic conditions straightforwardly which makes them suitable for establishing decidability and complexity results. Furthermore, their soundness and completeness are mostly proven by a routine check of said rules. Finally, external calculi usually allow for the explicit extraction of counter-models from failed proofs of invalid formulas.

In this paper, we present constraint tableaux for $\fbbirelKGsquare$. The original idea is due to~\cite{Haehnle1992,Haehnle1994,Haehnle1999} where constraint tableaux were used as a decision procedure for the propositional \L{}ukasiewicz logic $\Luk$. A similar calculus for the Rational Pawe\l{}ka logic was proposed in~\cite{diLascioGisolfi2005}. In~\cite{BilkovaFrittellaKozhemiachenko2021}, we presented constraint tableaux for $\Luk^2$ and $\Gsquare$ --- the paraconsistent expansions of $\Luk$ and $\mathsf{G}$. The constraint tableaux for finitely branching $\KbiG$ and $\KGsquare$ are provided in~\cite{BilkovaFrittellaKozhemiachenko2022IJCAR}. The present calculus which we call $\mathcal{T}\!\left(\fbbirelKGsquare\right)$ is an adaptation of $\mathcal{T}\!\left(\fbKGsquare\right)$~\cite{BilkovaFrittellaKozhemiachenko2022IJCAR}.
\begin{definition}\label{def:TfbbirelKGsquare}
We fix a set of state-labels $\mathsf{W}$ and let $\lesssim\in\!\{<,\leqslant\}$ and $\gtrsim\in\!\{>,\geqslant\}$. Let further $w\!\in\!\mathsf{W}$, $\mathbf{x}\!\in\!\{1,2\}$, $\phi\!\in\!\bimodalLsquare$, and $c\!\in\!\{0,1\}$. A~\emph{structure} is either $w\!:\!\mathbf{x}\!:\!\phi$, $c$, $w\mathsf{R}^+w'$, or $w\mathsf{R}^+w'$. We denote the set of structures with $\Str$. Structures of the form $w\!:\!\mathbf{x}\!:\!p$, $w\mathsf{R}^+w'$, and $w\mathsf{R}^-w'$ are called \emph{atomic} (denoted $\AStr$).

We define a \emph{constraint tableau} as a downward branching tree whose branches are sets containing constraints $\mathfrak{X}\lesssim\mathfrak{X'}$ ($\mathfrak{X},\mathfrak{X'}\in\Str$). Each branch can be extended by an application of a~rule\footnote{If $\mathfrak{X}\!<\!1,\mathfrak{X}\!<\!\mathfrak{X}'\!\in\!\mathcal{B}$ or $0\!<\!\mathfrak{X}',\mathfrak{X}\!<\!\mathfrak{X}'\!\in\!\mathcal{B}$, the rules are applied only to $\mathfrak{X}\!<\!\mathfrak{X}'$.} below (bars denote branching, $i,j\in\{1,2\}$, $i\neq j$, $w\mathsf{R}^+w'$ and $w\mathsf{R}^-w'$ occur on the branch, $w''$ is fresh on the branch).
\[\scriptsize{\begin{array}{cccc}
\neg_i\!\lesssim\!\dfrac{w\!:\!i\!:\!\neg\phi\!\lesssim\!\mathfrak{X}}{w\!:\!j\!:\!\phi\!\lesssim\!\mathfrak{X}}
&
\neg_i\!\gtrsim\!\dfrac{w\!:\!i\!:\!\neg\phi\!\gtrsim\!\mathfrak{X}}{w\!:\!j\!:\!\phi\!\gtrsim\!\mathfrak{X}}
&
\rightarrow_1\!\leqslant\!\dfrac{w\!:\!1\!:\!\phi\!\rightarrow\!\phi'\!\leqslant\!\mathfrak{X}}{\mathfrak{X}\!\geqslant\!{1}\left|\begin{matrix}\mathfrak{X}\!<\!{1}\\w\!:\!1\!:\!\phi'\!\leqslant\!\mathfrak{X}\\w\!:\!1\!:\!\phi\!>\!w\!:\!1\!:\!\phi'\end{matrix}\right.}
&
\rightarrow_2\!\geqslant\!\dfrac{w\!:\!2\!:\!\phi\rightarrow\phi'\!\geqslant\!\mathfrak{X}}{\mathfrak{X}\!\leqslant\!{0}\left|\begin{matrix}\mathfrak{X}\!>\!{0}\\w\!:\!2\!:\!\phi'\!\geqslant\!\mathfrak{X}\\w\!:\!2\!:\!\phi'\!>\!w\!:\!2\!:\!\phi\end{matrix}\right.}
\end{array}}\]

\[\scriptsize{\begin{array}{cccc}
\wedge_1\!\gtrsim\!\dfrac{w\!:\!1\!:\!\phi\!\wedge\!\phi'\!\gtrsim\!\mathfrak{X}}{\begin{matrix}w\!:\!1\!:\!\phi\!\gtrsim\!\mathfrak{X}\\w\!:\!1\!:\!\phi'\!\gtrsim\!\mathfrak{X}\end{matrix}}
&
\wedge_2\!\lesssim\!\dfrac{w\!:\!2\!:\!\phi\!\wedge\!\phi'\!\lesssim\!\mathfrak{X}}{\begin{matrix}w\!:\!2\!:\!\phi\!\lesssim\!\mathfrak{X}\\w\!:\!2\!:\!\phi'\!\lesssim\!\mathfrak{X}\end{matrix}}
&
\rightarrow_1\!<\!\dfrac{w\!:\!1\!:\!\phi\rightarrow\phi'\!<\!\mathfrak{X}}{\begin{matrix}w\!:\!1\!:\!\phi'\!<\!\mathfrak{X}\\w\!:\!1\!:\!\phi\!>\!w\!:\!1\!:\!\phi'\end{matrix}}
&
\rightarrow_2\!>\!\dfrac{w\!:\!2\!:\!\phi\rightarrow\phi'\!>\!\mathfrak{X}}{\begin{matrix}w\!:\!2\!:\!\phi'\!>\!\mathfrak{X}\\w\!:\!2\!:\!\phi'\!>\!w\!:\!2\!:\!\phi\end{matrix}}
\end{array}}\]

\[\scriptsize{\begin{array}{cc}
\wedge_1\!\lesssim\!\dfrac{w\!:\!1\!:\!\phi\wedge\phi'\!\lesssim\!\mathfrak{X}}{w\!:\!1\!:\!\phi\!\lesssim\!\mathfrak{X}\mid w\!:\!1\!:\!\phi'\!\lesssim\!\mathfrak{X}}
&\quad
\wedge_2\!\gtrsim\!\dfrac{w\!:\!2\!:\!\phi\wedge\phi'\!\gtrsim\!\mathfrak{X}}{w\!:\!2\!:\!\phi\!\gtrsim\!\mathfrak{X}\mid w\!:\!2\!:\!\phi'\!\gtrsim\!\mathfrak{X}}
\end{array}}\]

\[\scriptsize{\begin{array}{cc}
\rightarrow_1\!\gtrsim\!\dfrac{w\!:\!1\!:\!\phi\!\rightarrow\!\phi'\!\gtrsim\!\mathfrak{X}}{w\!:\!1\!:\!\phi\!\leqslant\!w\!:\!1\!:\!\phi'\mid w\!:\!1\!:\!\phi'\!\gtrsim\!\mathfrak{X}}&\rightarrow_2\!\lesssim\!\dfrac{w\!:\!2\!:\!\phi\rightarrow\phi'\!\lesssim\!\mathfrak{X}}{w\!:\!2\!:\!\phi'\!\leqslant\!w\!:\!2\!:\!\phi\mid w\!:\!2\!:\!\phi'\!\lesssim\!\mathfrak{X}}
\end{array}}\]

\[\scriptsize{\begin{array}{ccc}
\Box_1\!\!\gtrsim\!\dfrac{w\!:\!1\!:\!\Box\phi\!\gtrsim\!\mathfrak{X}}{w'\!:\!1\!:\!\phi\gtrsim\mathfrak{X}\mid w\mathsf{R}^+w'\!\leqslant\!w'\!:\!1\!:\!\phi}
&\quad
\Box_1\!\!\leqslant\!\dfrac{w\!:\!1\!:\!\Box\phi\!\leqslant\!\mathfrak{X}}{\mathfrak{X}\geqslant1\left|\begin{matrix}\mathfrak{X}\!<\!1\\w\mathsf{R}^+w''\!>\!w''\!:\!1\!:\!\phi\\w''\!:\!:\!1\!:\!\phi\leqslant\mathfrak{X}\end{matrix}\right.}
&\quad
\Box_1\!\!<\!\dfrac{w\!:\!1\!:\!\Box\phi\!<\!\mathfrak{X}}{\begin{matrix}w\mathsf{R}^+w''\!>\!w''\!:\!1\!:\!\phi\\w''\!:\!:\!1\!:\!\phi\!<\!\mathfrak{X}\end{matrix}}
\end{array}}\]

\[\scriptsize{\begin{array}{cccc}
\lozenge_1\!\!\gtrsim\!\dfrac{w\!:\!1\!:\!\lozenge\phi\!\gtrsim\!\mathfrak{X}}{\begin{matrix}w\mathsf{R}^+w''\!\gtrsim\!\mathfrak{X}\\w''\!:\!1\!:\!\phi\!\gtrsim\!\mathfrak{X}\end{matrix}}
&
\lozenge_1\!\!\lesssim\!\dfrac{w\!:\!1\!:\!\lozenge\phi\!\lesssim\!\mathfrak{X}}{w'\!:\!1\!:\!\phi\lesssim\mathfrak{X}\mid w\mathsf{R}^+w'\!\lesssim\!\mathfrak{X}}&
\Box_2\!\!\gtrsim\!\dfrac{w\!:\!2\!:\!\Box\phi\!\gtrsim\!\mathfrak{X}}{\begin{matrix}w\mathsf{R}^-w''\!\gtrsim\!\mathfrak{X}\\w''\!:\!2\!:\!\phi\!\gtrsim\!\mathfrak{X}\end{matrix}}
&
\Box_2\!\!\lesssim\!\dfrac{w\!:\!2\!:\!\lozenge\phi\!\lesssim\!\mathfrak{X}}{w'\!:\!2\!:\!\phi\lesssim\mathfrak{X}\mid w\mathsf{R}^-w'\!\lesssim\!\mathfrak{X}}
\end{array}}\]

\[\scriptsize{\begin{array}{ccc}
\lozenge_2\!\!\gtrsim\!\dfrac{w\!:\!2\!:\!\lozenge\phi\!\gtrsim\!\mathfrak{X}}{w'\!:\!2\!:\!\phi\gtrsim\mathfrak{X}\mid w\mathsf{R}^-w'\!\leqslant\!w'\!:\!1\!:\!\phi}
&\quad
\lozenge_2\!\!\leqslant\!\dfrac{w\!:\!2\!:\!\lozenge\phi\!\leqslant\!\mathfrak{X}}{\mathfrak{X}\geqslant1\left|\begin{matrix}\mathfrak{X}\!<\!1\\w\mathsf{R}^-w''\!>\!w''\!:\!2\!:\!\phi\\w''\!:\!:\!2\!:\!\phi\leqslant\mathfrak{X}\end{matrix}\right.}
&\quad
\lozenge_2\!\!<\!\dfrac{w\!:\!2\!:\!\lozenge\phi\!<\!\mathfrak{X}}{\begin{matrix}w\mathsf{R}^-w''\!>\!w''\!:\!2\!:\!\phi\\w''\!:\!:\!2\!:\!\phi\!<\!\mathfrak{X}\end{matrix}}
\end{array}}\]

A tableau's branch $\mathcal{B}$ is \emph{closed} iff one of the following conditions applies:
\begin{itemize}[noitemsep,topsep=2pt]
\item the transitive closure of $\mathcal{B}$ under $\lesssim$ contains $\mathfrak{X}<\mathfrak{X}$;
\item ${0}\geqslant{1}\in\mathcal{B}$, or $\mathfrak{X}>{1}\in\mathcal{B}$, or $\mathfrak{X}<{0}\in\mathcal{B}$.
\end{itemize}
A tableau is \emph{closed} iff all its branches are closed. We say that there is a \emph{tableau proof} of $\phi$ iff there are closed tableaux starting from $w\!:\!1\!:\!\phi<1$ and $w\!:\!2\!:\!\phi>0$.

An open branch $\mathcal{B}$ is \emph{complete} iff the following condition is met.
\begin{itemize}[noitemsep,topsep=2pt]
\item[$*$]\emph{If all premises of a rule occur on $\mathcal{B}$, then its one conclusion\footnote{Note that branching rules have \emph{two} conclusions.} occurs on~$\mathcal{B}$.}
\end{itemize}
\end{definition}

Before proceeding to the completeness proof, let us explain how $\mathcal{T}\!\left(\fbbirelKGsquare\right)$ works. First, we summarise the meanings of tableaux entries in the table below.
\begin{center}
\begin{tabular}{c|c}
\textbf{entry}&\textbf{interpretation}\\\hline
$w\!:1\!:\!\phi\leqslant w'\!:2\!:\!\phi'$&$v_1(\phi,w)\leq v_2(\phi',w')$\\
$w\!:\!2\!:\!\phi\leqslant c$&$v_2(\phi,w)\leq c$ with $c\in\{0,1\}$\\
$w\mathsf{R}^-w'\leqslant w'\!:2\!:\!\phi$&$wR^-w'\leq v_2(\phi,w')$
\end{tabular}
\end{center}
\begin{definition}[Branch realisation]\label{G2branchsatisfaction}
A model $\mathfrak{M}=\langle W,R^+,R^-,v_1,v_2\rangle$ with $W=\{w:w\text{ occurs on }\mathcal{B}\}$ \emph{realises a~branch $\mathcal{B}$} of a tableau iff there is a function $\real:\Str\rightarrow[0,1]$ s.t.\ for every $\mathfrak{X},\mathfrak{Y},\mathfrak{Y}',\mathfrak{Z},\mathfrak{Z}'\in\Str$ with $\mathfrak{X}=w:\mathbf{x}:\phi$, $\mathfrak{Y}=w_i\mathsf{R}^+w_j$, and $\mathfrak{Y}'=w'_i\mathsf{R}^-w'_j$ the following holds ($\mathbf{x}\in\{1,2\}$, ${c}\in\{0,1\}$).
\begin{itemize}[noitemsep,topsep=2pt]
\item If $\mathfrak{Z}\lesssim\mathfrak{Z}'\in\mathcal{B}$, then $\real(\mathfrak{Z})\lesssim\real(\mathfrak{Z}')$.
\item $\real(\mathfrak{X})=v_\mathbf{x}(\phi,w)$, $\real(c)=c$, $\real(\mathfrak{Y})=w_iR^+w_j$, $\real(\mathfrak{Y}')=w'_iR^-w'_j$
\end{itemize}
\end{definition}
Let us now provide an example of a failed proof with a complete open branch (marked with $\frownie$ below) and construct a model realising it.
\begin{center}
\begin{minipage}{.25\linewidth}
\scriptsize{
\begin{forest}
smullyan tableaux
[w_0\!:\!1\!:\!\Box p\!\rightarrow\!\Box\neg\lozenge p\!<\!1
[w_0\!:\!1\!:\!\Box p\!>\!w_0\!:\!\Box\neg\lozenge p
[w_0\!:\!\Box\neg\lozenge p\!<\!1
[w_0\mathsf{R}^+w_1\!>\!w_1\!:\!1\!:\!\neg\lozenge p
[w_1\!:\!1\!:\!\neg\lozenge p<w_0\!:\!1\!:\!\Box p
[w_1\!:\!2\!:\!\lozenge p<w_0\!:\!1\!:\!\Box p
[w_1\!:\!2\!:\!\lozenge p<w_1\!:\!1\!:\!p
[w_1\mathsf{R}^-w_2\!>\!w_2\!:\!2\!:\! p
[w_2\!:\!2\!:\! p\!<\!w_1\!:\!1\!:\!p[\frownie]]
]]]]]]]]
\end{forest}}
\end{minipage}
\hfill
\begin{minipage}{.70\linewidth}
\[\xymatrix{w_0\ar@/^1pc/[rr]|{R^+=1}&&w_1:p=\left(\frac{1}{2},0\right)\ar@/_1pc/[rr]|{R^-=1}&&w_2:p=(0,0)}\]
\end{minipage}
\end{center}
The proof goes as follows: first, we apply all the possible propositional rules, then the modal rules that introduce new states, and then the modal rules using the newly introduced states. We repeat the process until we decompose all structures into atoms.

We then extract a model from the complete open branch marked with $\frownie$ s.t.\ $v_1(\Box p\!\rightarrow\!\Box\neg\lozenge p,w_0)<1$. We use $w$'s on the branch as the carrier and assign the values of variables and relations so that they correspond to $\lesssim$.
\begin{theorem}[$\mathcal{T}\!\left(\fbbirelKGsquare\right)$ completeness]\label{theorem:T+-KG2completeness}
$\phi$ is $\birelKGsquare$ valid iff there is a tableau proof of $\phi$.
\end{theorem}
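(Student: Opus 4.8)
The plan is to prove soundness and completeness separately, following the standard template for constraint tableaux, adapting the argument from $\mathcal{T}\!\left(\fbKGsquare\right)$ in~\cite{BilkovaFrittellaKozhemiachenko2022IJCAR}. For soundness (the ``if'' direction: a tableau proof implies validity), I would argue contrapositively. Suppose $\phi$ is not valid, so there is a pointed model falsifying it, meaning either $v_1(\phi,w_0)<1$ or $v_2(\phi,w_0)>0$. I would then show that every rule is \emph{realisation-preserving} in the sense of Definition~\ref{G2branchsatisfaction}: if a model realises the premises of a rule on a branch $\mathcal{B}$, then it realises (at least) one of the conclusions. This is a rule-by-rule check that the semantic clauses of Definition~\ref{def:semantics} justify each constraint; for instance, $\rightarrow_1\!\leqslant$ reflects the case split in the definition of $\rightarrow_\mathsf{G}$, and the modal rules $\Box_1$, $\lozenge_2$ etc.\ reflect the $\inf$/$\sup$ clauses together with the witnessing guaranteed by finite branching. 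It follows by induction on the tableau that a realisable root yields an open (realisable) branch, since a realised branch can never be closed (a realising $\real$ would force $\real(\mathfrak{X})<\real(\mathfrak{X})$, which is impossible). Hence no closed tableau exists, contradicting the assumed proof.

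For completeness (the ``only if'' direction), I would show that any open complete branch $\mathcal{B}$ yields a countermodel. Given a tableau that is \emph{not} closed, some branch $\mathcal{B}$ remains open and can be saturated into a complete open branch by condition~$*$. I would define a model $\mathfrak{M}_\mathcal{B}=\langle W,R^+,R^-,v_1,v_2\rangle$ whose carrier is the set of state-labels on $\mathcal{B}$, and extract numerical values via a function $\real$ respecting all constraints $\mathfrak{X}\lesssim\mathfrak{X}'$ on $\mathcal{B}$. The key sublemma is that the constraints on an open branch form a strict-partial-order-compatible system: since $\mathcal{B}$ is open, its transitive closure under $\lesssim$ contains no $\mathfrak{X}<\mathfrak{X}$, so the atomic structures can be linearly ordered consistently and assigned values in $[0,1]$ (the order-based nature of G\"{o}del semantics makes only the relative order matter). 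The central claim, proved by induction on the structure of $\phi$, is that this assignment realises \emph{all} structures on $\mathcal{B}$, not merely the atomic ones: completeness of the branch guarantees that each compound structure has been decomposed by the corresponding rule, and the induction hypothesis combined with the semantic clauses gives $\real(w\!:\!\mathbf{x}\!:\!\phi)=v_\mathbf{x}(\phi,w)$. In particular, the root structures $w_0\!:\!1\!:\!\phi<1$ or $w_0\!:\!2\!:\!\phi>0$ are realised, so $\mathfrak{M}_\mathcal{B}$ falsifies $\phi$, and $\mathfrak{M}_\mathcal{B}$ is finitely branching because each modal rule introduces only finitely many successor labels.

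I expect the main obstacle to be the \textbf{correct numerical extraction of $\real$ from the order constraints}, particularly the treatment of the modal rules and the interaction between the two valuations $v_1,v_2$ and the two relations $R^+,R^-$. Two subtleties deserve care. First, the branching modal rules such as $\Box_1\!\gtrsim$ and $\lozenge_1\!\lesssim$ encode the $\inf$/$\sup$ clauses through a disjunction (either a witness achieves the bound or the relation value is small enough), and I must verify that a complete open branch always selects a disjunct that is jointly satisfiable with the already-fixed successor values; this is where finite branching is essential, since it guarantees the infima and suprema are attained and prevents unwitnessed models of the kind flagged before Theorem~\ref{theorem:finitebranching}. Second, because constraints mix structures of the form $w\!:\!1\!:\!\phi$, $w\!:\!2\!:\!\phi$, $w\mathsf{R}^+w'$, and $w\mathsf{R}^-w'$ on a single branch, I must confirm that the single order on $\Str$ induced by $\mathcal{B}$ can be simultaneously realised by valuations and relation-degrees taking values in $[0,1]$ with $0$ and $1$ pinned by the constants; this amounts to checking that the closure conditions (no $\mathfrak{X}<\mathfrak{X}$, no $\mathfrak{X}>1$, no $\mathfrak{X}<0$) are exactly the obstructions to such a realisation. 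Once the realisation lemma is in place, both directions follow, and the proof concludes by noting that closure of a realised branch is impossible, while an open complete branch always realises the root.
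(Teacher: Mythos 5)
Your proposal follows essentially the same route as the paper's proof: soundness by checking rule-by-rule that a realising model of the premise realises at least one conclusion (with finite branching turning $\inf$/$\sup$ into attained $\min$/$\max$) together with the non-realisability of closed branches, and completeness by extracting a model from a complete open branch via an order-respecting numerical assignment to atomic structures, followed by an induction showing that every constraint on the branch, not just the atomic ones, is realised. The only difference is one of detail rather than approach: where you assert that an acyclic constraint order admits a consistent assignment into $[0,1]$, the paper constructs it explicitly (equivalence classes $[\str]$ of atomic structures, ordered by $\prec$ and assigned fractions with common denominator $\#^{\str}$), a construction it then reuses to get the finite value set underlying the $\pspace$ decision procedure.
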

\begin{proof}
We adapt the technique from~\cite[Theorem~3]{BilkovaFrittellaKozhemiachenko2022IJCAR} and consider only the most important cases.

For soundness, we prove that if the premise of the rule is realised, then so is at least one of its conclusions. Note that since we work with finitely branching frames, infima and suprema from Definition~\ref{def:semantics} become maxima and minima. Since propositional rules are exactly the same as in $\mathcal{T}\left(\fbKGsquare\right)$~\cite{BilkovaFrittellaKozhemiachenko2022IJCAR}, we consider only the most interesting cases of modal rules. We tackle $\Box_1\!\!\gtrsim$ (cf.\ Definition~\ref{def:TfbbirelKGsquare}) and show that if $\mathfrak{M}=\langle W,R^+,R^-,v_1,v_2\rangle$ realises the premise of the rule, it also realises one of its conclusions.

Assume w.l.o.g.\ that $\mathfrak{X}=w''\!:\!2\!:\!\psi$, and let $\mathfrak{M}$ realise $w\!:\!1\!:\!\Box\phi\geqslant w''\!:\!2\!:\!\psi$. Now, since $R^+$ is finitely branching, we have $\min\limits_{w'\in W}\{w\mathsf{R}^+w'\rightarrow_\mathsf{G}v_1(\phi,w')\}\geq v_2(\psi,w)$, whence at each $w'\in W$ s.t.\ $wR^+w'>0$\footnote{Recall that if $u\mathsf{R}^+u'\notin\mathcal{B}$, we set $u\mathsf{R}^+u'=0$.}, either $v_1(\phi,w')\geq v_2(\psi,w'')$ or $w\mathsf{R}^+w'\geq v_2(\psi,w'')$. Thus, at least one conclusion of the rule is satisfied.

Other rules can be dealt with similarly. Since closed branches are not realisable, the result follows.

To prove completeness, we show that a realising model can be built for every complete open branch $\mathcal{B}$. First, we set $W=\{w:w\text{ occurs in }\mathcal{B}\}$. Denote the set of atomic structures appearing on $\mathcal{B}$ with $\AStr(\mathcal{B})$ and let $\mathcal{B}^+$ be the transitive closure of $\mathcal{B}$ under $\lesssim$. Now, we assign values. For $i\!\in\!\{1,2\}$, if $w\!:\!i\!:\!p\!\geqslant\!1\!\in\!\mathcal{B}$, we set $v_i(p,w)\!=\!1$. If $w\!:\!i\!:\!p\!\leqslant\!0\!\in\!\mathcal{B}$, we set $v_i(p,w)\!=\!0$. If $w\mathsf{S}w'\!<\!\mathfrak{X}\!\notin\!\mathcal{B}^+$, we set $w\mathsf{S}w'\!=\!1$. If $w\!:\!i\!:\!p$ or $w\mathsf{S}w'$ does not occur on $\mathcal{B}$, we set $v_i(p,w)\!=\!0$ and $w\mathsf{S}w'\!=\!0$.

For each $\str\in\AStr$, we now set
\[[\str]\!=\!\left\{\str'\left| \; \begin{matrix}\str\leqslant\str'\in\mathcal{B}^+\text{ and }\str<\str
\notin\mathcal{B}^+\\
\text{or}\\
\str\geqslant\str'\in\mathcal{B}^+\text{ and }\str>\str'\notin\mathcal{B}^+
\end{matrix}\right.\right\}\]
Denote the number of $[\str]$'s with $\#^\mathsf{str}$. Since the only possible loop in $\mathcal{B}^+$ is $\str\leqslant\str'\leqslant\ldots\leqslant\str$ where all elements belong to $[\str]$, it is clear that $\#^\mathsf{str}\leq2\cdot|\AStr(\mathcal{B})|\cdot|W|$. Put $[\str]\prec[\str']$ iff there are $\str_i\in[\str]$ and $\str_j\in[\str']$ s.t.\ $\str_i<\str_j\in\mathcal{B}^+$. We now set the valuation of these structures as follows:
\begin{align*}
\str=\dfrac{|\{[\str']:[\str']\prec[\str]\}|}{\#^\mathsf{str}}%\tag{$*$}\label{valuationchain}
\end{align*}
It is clear that constraints containing only atomic structures and constants are now satisfied. To show that all other constraints are satisfied, we prove that if at least one conclusion of the rule is satisfied, then so is the premise. The proof is done by considering the cases of rules. We consider only the case of $\Box_1\!\!\gtrsim$ and assume w.l.o.g.\ that $\mathfrak{X}=w''\!:\!2\!:\!\psi$.

For $\Box_1\!\!\gtrsim$, assume that for every $w'$ s.t.\ $w\mathsf{R}^+w'$ is on the branch, either $w'\!:\!1\!:\!\phi\geqslant w''\!:\!2\!:\!\psi$ or $w\mathsf{R}^+w'\leqslant w'\!:\!1\!:\!\phi$ is realisable. Thus, by the inductive hypothesis, for every $w'\in R^+(w)$, it holds that $v_1(\phi,w')\geq v_2(\psi,w'')$ or $wR^+w'\leq v_1(\phi,w')$. Hence, $v_1(\Box\phi,w)\geq v_2(\psi,w'')$ and $w\!:\!1\!:\!\Box\phi\geqslant w''\!:\!2\!:\!\psi$ is realised.
\end{proof}

We can now use $\mathcal{T}\!\left(\fbbirelKGsquare\right)$ to obtain the expected finite model property and upper bound on the size of satisfying (or falsifying\footnote{Satisfiability and falsifiability (non-validity) are reducible to each other: $\phi$ is satisfiable iff ${\sim\sim}(\phi\coimplies\mathbf{0})$ is falsifiable; $\phi$ is falsifiable iff ${\sim\sim}(\mathbf{1}\coimplies\phi)$ is satisfiable.}) models.
\begin{corollary}\label{cor:FMP}
Let $\phi\in\bimodalLsquare$ be \emph{not $\fbbirelKGsquare$ valid}, and let $k$ be the number of modalities in it. Then there is a model $\mathfrak{M}$ of the size $\leq k^{k+1}$ and depth $\leq k$ and $w\in\mathfrak{M}$ s.t.\ $v_1(\phi,w)\neq1$ or $v_2(\phi,w)\neq0$.
\end{corollary}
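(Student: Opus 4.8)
The plan is to read the model off directly from a complete open branch of $\mathcal{T}\!\left(\fbbirelKGsquare\right)$ and then to bound its dimensions by counting how the fresh state-labels are generated during tableau expansion. First I would invoke Theorem~\ref{theorem:T+-KG2completeness}: since $\phi$ is not $\fbbirelKGsquare$ valid, there is no tableau proof, so at least one of the two tableaux starting from $w\!:\!1\!:\!\phi<1$ or $w\!:\!2\!:\!\phi>0$ fails to close and hence has a complete open branch $\mathcal{B}$. The model construction in the completeness proof turns $\mathcal{B}$ into a realising model $\mathfrak{M}$ whose carrier is the set of state-labels occurring on $\mathcal{B}$ and in which $v_1(\phi,w)\neq1$ (resp.\ $v_2(\phi,w)\neq0$) at the root $w$. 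It therefore suffices to run the tableau under a disciplined expansion strategy and to bound the number of state-labels it introduces.

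The expansion strategy is the one illustrated before the theorem: at every state, exhaust the propositional rules first, then apply the modal rules, grouping the fresh-state-introducing rules ($\Box_1\!\leqslant$, $\Box_1\!<$, $\lozenge_1\!\gtrsim$, $\Box_2\!\gtrsim$, $\lozenge_2\!\leqslant$, $\lozenge_2\!<$) into one layer before descending. Two bounds then have to be established. For the \emph{depth}, observe that each fresh label $w''$ is created from a structure of the form $w\!:\!\mathbf{x}\!:\!\heartsuit\psi\bowtie\mathfrak{X}$ and thereafter carries only structures built from $\psi$ and its subformulas; every step down the generated $R^+\cup R^-$-tree strips one modality, so the longest path is bounded by the modal depth of $\phi$, which is at most $k$. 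For the \emph{branching}, I would argue that each modal subformula occurrence of $\phi$ need be witnessed at a given state by at most one fresh successor, so the out-degree of any state is at most the number of modal subformula occurrences, i.e.\ at most $k$. Combining the two, the generated frame is a tree of depth $\leq k$ and out-degree $\leq k$, whence it has at most $\sum_{i=0}^{k}k^{i}\leq k^{k+1}$ states; since every state has finitely many successors, $\mathfrak{M}$ is indeed a finitely branching model realising the branch.

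The main obstacle is the branching bound, namely that one witness per existential modal subformula occurrence suffices. As stated, a fresh-state rule such as $\Box_1\!<$ may in principle fire several times on the same $w\!:\!1\!:\!\Box\psi$ when it appears under different right-hand sides $\mathfrak{X}$, which would inflate the out-degree beyond $k$. The key step is therefore to check that the completeness construction stays faithful when, for each state $w$ and each modal subformula $\heartsuit\psi$, a single witness world is reused for all the constraints on $w\!:\!\mathbf{x}\!:\!\heartsuit\psi$ (taking the most demanding bound), exactly as the infima and suprema of Definition~\ref{def:semantics} are attained at a single successor in the finitely branching setting. Once this ``one witness per existential subformula'' normalisation is justified, the out-degree bound $k$ follows, the depth bound $k$ is immediate, and the size bound $k^{k+1}$ is a routine geometric count.
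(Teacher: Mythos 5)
Your proposal is correct and takes essentially the same route as the paper: the paper's own proof simply invokes Theorem~\ref{theorem:T+-KG2completeness} to extract a falsifying model from a complete open branch and then asserts, without further argument, that the depth is bounded by the maximal nesting of modalities and the width by the number of modalities on the same level of nesting. The witness-multiplication subtlety you isolate is genuine --- the calculus as written spawns one fresh state per existential constraint, and the same $w\!:\!\mathbf{x}\!:\!\heartsuit\psi$ can indeed occur with several right-hand sides $\mathfrak{X}$ after propositional decomposition --- but the paper passes over it silently, so your normalisation supplies a justification the paper omits rather than a genuinely different argument. One refinement to your sketch: since the various bounds $\mathfrak{X}_1,\ldots,\mathfrak{X}_n$ constraining $w\!:\!\mathbf{x}\!:\!\heartsuit\psi$ need not be comparable on the branch, ``take the most demanding bound'' is not well-defined syntactically; instead let the single fresh witness inherit \emph{all} of these constraints simultaneously --- this is sound because in a finitely branching realising model the successor attaining the relevant infimum (resp.\ supremum) of Definition~\ref{def:semantics} satisfies every one of them at once, and the model construction in the completeness direction of Theorem~\ref{theorem:T+-KG2completeness} then goes through unchanged.
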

\begin{proof}
By theorem~\ref{theorem:T+-KG2completeness}, if $\phi$ is \emph{not $\fbbirelKGsquare$ valid}, we can build a~falsifying model using tableaux. It is also clear from the rules in Definition~\ref{def:TfbbirelKGsquare} that the depth of the constructed model is bounded from above by the maximal number of nested modalities in $\phi$. The width of the model is bounded by the maximal number of modalities on the same level of nesting.
\end{proof}

Now, using the upper bound on the size of the model, we can reduce the satisfiability in $\fbbirelKGsquare$ to the satisfiability on the models where the values of subformulas and relations are over some finite bi-G\"{o}del algebra. This allows us to avoid comparisons of formulas in different states, whence, we can build the satisfying model branch by branch. We adapt the algorithm from~\cite{BilkovaFrittellaKozhemiachenko2022IJCAR}.
\begin{theorem}\label{theorem:fbbirelKG2PSPACE}
$\fbbirelKGsquare$ validity and satisfiability are $\mathsf{PSPACE}$ complete.
\end{theorem}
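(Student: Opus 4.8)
The plan is to establish $\pspace$-hardness and $\pspace$-membership separately. For \textbf{hardness}, I would note that $\fbbirelKGsquare$ contains $\KG$ as a fragment (restricting to $\neg$-free formulas and the $v_1$-semantics, recalling Remark~\ref{rem:v1equivalence}), and $\KG$ validity and satisfiability are already known to be $\pspace$-complete~\cite{CaicedoMetcalfeRodriguezRogger2013,MetcalfeOlivetti2011}. Since by Theorem~\ref{theorem:crispextension} and Remark~\ref{rem:v1equivalence} the $\neg$-free fragment evaluated on a single relation behaves exactly like $\KG$, a $\KG$-formula is $\fbbirelKGsquare$ valid iff it is $\KG$ valid, giving a trivial reduction and hence $\pspace$-hardness. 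I would spell this out carefully since the two relations and two valuations could in principle interfere, but the $\neg$-free, mono-relational restriction isolates a faithful copy of $\KG$.

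For \textbf{membership}, I would build on Corollary~\ref{cor:FMP}: any falsifiable $\phi$ has a falsifying model of depth $\leq k$ and size $\leq k^{k+1}$, where $k$ is the number of modalities. The key idea, adapted from~\cite{BilkovaFrittellaKozhemiachenko2022IJCAR}, is that one need not store the whole model at once. First I would observe that, since only finitely many subformula-values and relation-values occur in a model of bounded size, all relevant comparisons range over a finite bi-G\"{o}del subalgebra; this lets us replace the global ordering constraints of the tableau with local guesses of values drawn from a polynomially-bounded finite set. The algorithm then explores the tableau / model tree in a depth-first manner: at each state it guesses the (finitely many) values of the relevant subformulas and the relation-degrees $wR^+w'$, $wR^-w'$ to each successor, checks the local G\"{o}del constraints for the propositional and modal rules, and recurses into each successor branch one at a time, reusing space.

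The crucial space bound comes from the fact that the recursion depth is at most $k$ (the modal depth) and that at each level we store only the polynomially-many values for the current state and its local obligations, not the exponentially-many sibling branches simultaneously. Because the branches can be verified independently once values have been guessed locally---this is exactly what avoiding cross-state comparisons buys us---the whole procedure runs in nondeterministic polynomial space, and by Savitch's theorem $\mathsf{NPSPACE}=\pspace$. I expect the \textbf{main obstacle} to be justifying rigorously that the comparisons introduced by the tableau rules (e.g.\ $w\mathsf{R}^+w'\leqslant w'\!:\!1\!:\!\phi$ and $w\!:\!1\!:\!\phi>w\!:\!1\!:\!\phi'$) can indeed be decided locally after fixing a finite value set, so that the model can legitimately be constructed branch-by-branch without backtracking across the tree; this is where the finite-algebra reduction and the bound from Corollary~\ref{cor:FMP} must be combined precisely, mirroring the argument of~\cite{BilkovaFrittellaKozhemiachenko2022IJCAR} but accounting for the second relation $R^-$ and the support-of-falsity valuation $v_2$.
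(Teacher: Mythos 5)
Your membership argument is essentially the paper's own: bounded depth and size from Corollary~\ref{cor:FMP}, restriction of all values to a finite bi-G\"{o}del algebra, value-labelled rules that eliminate cross-state comparisons, and a depth-first, branch-by-branch construction that reuses space. That half is sound.

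The hardness half has a genuine gap: the reduction you propose does not exist. You claim that ``a $\KG$-formula is $\fbbirelKGsquare$ valid iff it is $\KG$ valid,'' but this is exactly what the paper disproves. Theorem~\ref{theorem:noextension} exhibits $\lozenge{\sim\sim}p\rightarrow{\sim\sim}\lozenge p$, a theorem of fuzzy $\KG$, which fails in $\birelKGsquare$ on a two-point (hence finitely branching, and even mono-relational) fuzzy frame; so the identity map is not a reduction from $\KG$ validity. The source of the error is the appeal to Theorem~\ref{theorem:crispextension}, which concerns \emph{crisp} frames only, whereas $\fbbirelKGsquare$ is the \emph{fuzzy} logic over finitely branching frames: validity in $\birelKGsquare$ demands $v(\phi,w)=(1,0)$, and while the $v_1$-component does behave like $\KbiG$ (Remark~\ref{rem:v1equivalence}), the extra requirement $v_2(\phi,w)=0$ is what kills $\neg$-free $\KG$-theorems such as the one above. (A further, independent problem: the $\pspace$-hardness results you cite are for $\KG$ over \emph{all} frames, while $\fbbirelKGsquare$ quantifies only over finitely branching ones, and these validities differ --- e.g.\ ${\sim\sim}\Box(p\vee{\sim}p)$ is valid on finitely branching frames but not in general.) The paper avoids both problems by reducing from classical $\mathbf{K}$, which is $\pspace$-complete and has the finite model property, via two translations: $\phi^\triangledown$, which prefixes ${\sim\sim}$ to every subformula and controls the $v_1$-coordinate, and the dualizing translation $\phi^\triangle$ (swapping $\wedge$ with $\vee$, $\Box$ with $\lozenge$, and turning $\rightarrow$ into $\coimplies$) arranged so that $v_2(\mathbf{1}\coimplies\phi^\triangle,w)=0$ everywhere iff $\mathbf{K}\models\phi$, controlling the $v_2$-coordinate. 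Any repair of your argument would have to do something of this kind, precisely because $\fbbirelKGsquare$ does not contain $\KG$ as a fragment.
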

\begin{proof}
For the membership, observe from the proof of Theorem~\ref{theorem:T+-KG2completeness} that $\phi$ is satisfiable (falsifiable) on $\mathfrak{M}=\langle W,R^+,R^-,v_1,v_2\rangle$ iff all variables, $w\mathsf{R}^+w'$'s, and $w\mathsf{R}^-w'$'s have values from $\mathsf{V}=\left\{0,\frac{1}{\#^\str},\ldots,\frac{\#^\str-1}{\#^\str},1\right\}$ under which $\phi$ is satisfied (falsified).

Since $\#^\str$ is bounded from above, we can now replace constraints with labelled formulas and relational structures of the form $w\!:\!i\!:\!\phi\!=\!\mathsf{v}$ or $w\mathsf{S}w'\!=\!\mathsf{v}$ ($\mathsf{v}\in\mathsf{V}$) avoiding comparisons of values of formulas in different states. We close the branch if it contains $w\!:\!i\!:\!\psi\!=\!\mathsf{v}$ and $w\!:\!i\!:\!\psi\!=\!\mathsf{v}'$ for $\mathsf{v}\!\neq\!\mathsf{v}'$.

Now we replace the $\mathcal{T}\!\left(\fbbirelKGsquare\right)$ rules with ones that work with labelled structures. Below, we give as an example the rules\footnote{For a value $\mathsf{v}>0$ of $\lozenge\phi$ at $w$, we add a new state that witnesses $\mathsf{v}$, and for a~state on the branch, we guess a~value smaller than $\mathsf{v}$. Other modal rules can be rewritten similarly.} that replace $\lozenge_1\!\!\lesssim$.
\begin{center}
\scriptsize{\begin{align*}
\dfrac{w\!:\!1\!:\!\lozenge\phi\!=\!\frac{r}{\#^\str}}{\left.\begin{matrix}w\mathsf{R}^+w'\!=\!1\\w\!:\!1\!:\!\phi\!=\!\frac{r}{\#^\str}\end{matrix}\right|\left.\begin{matrix}w\mathsf{R}^+w'\!=\!\frac{r}{\#^\str}\\w\!:\!1\!:\!\phi\!=\!1\end{matrix}\right|\ldots\left|\begin{matrix}w\mathsf{R}^+w'\!=\!\frac{r}{\#^\str}\\w\!:\!1\!:\!\phi\!=\!\frac{r}{\#^\str}\end{matrix}\right.}
&&
\dfrac{w\!:\!i\!:\!\lozenge\phi\!=\!\frac{r}{\#^\str};(w\mathsf{R}^+w'\text{ occurs on the branch})}{w'\!:\!i\!:\!\phi\!=\!\frac{r-1}{\#^\str}\mid w\mathsf{R}^+w'\!=\!\frac{r-1}{\#^\str}\mid\ldots\mid w'\!:\!i\!:\!\phi\!=\!0}\end{align*}}
\end{center}
Note that the rules of such form prevent us from comparing values of formulas in different states.

We can now build a satisfying model for $\phi$ using polynomial space. We begin with $w_0\!:\!1\!:\phi\!=\!1$ (the algorithm for $w_0\!:\!1\!:\phi\!=\!0$ is the same) and start applying propositional rules (first, those that do not require branching). If we implement a branching rule, we pick one branch and work only with it: either until the branch is closed, in which case we pick another one; until no more rules are applicable (then, the model is constructed); or until we need to apply a modal rule to proceed. At this stage, we need to store only the subformulas of $\phi$ with labels denoting their value at~$w_0$.

Now we guess a~modal formula (say, $w_0\!:\!1\!:\!\lozenge\chi\!=\!\frac{1}{\#^\str}$) whose decomposition requires an introduction of a~new state ($w_1$) and apply this rule. Then we apply all modal rules whose implementation requires that $w_0\mathsf{R}^+w_1$ occur on the branch (again, if those require branching, we guess only one branch) and start from the beginning with the propositional rules. If we reach a contradiction, the branch is closed. Again, the only new entries to store are subformulas of $\phi$ (now, with fewer modalities), their values at $w_1$, and a~relational term $w_0\mathsf{R}^+w_1$ with its value. Since the depth of the model is $O(|\phi|)$ and since we work with modal formulas one by one, we need to store subformulas of $\phi$ with their values $O(|\phi|)$ times, so, we need only $O(|\phi|^2)$ space.

Finally, if no rule is applicable and there is no contradiction, we mark $w_0\!:\!2\!:\!\lozenge\chi\!=\!\frac{1}{\#^\str}$ as ‘safe’. Now we \emph{delete all entries of the tableau below it} and pick another unmarked modal formula that requires an introduction of a new state. Dealing with them one by one allows us to construct the model branch by branch. But since the length of each branch of the model is bounded by $O(|\phi|)$ and since we delete \emph{branches of the model} once they are shown to contain no contradictions, we need only polynomial space.

To establish $\pspace$ hardness, we provide a reduction from $\mathbf{K}$ validity. Namely, for every $\phi$ over $\{\mathbf{0},\wedge,\vee,\rightarrow,\Box,\lozenge\}$, we construct two formulas:
\begin{enumerate}[noitemsep,topsep=2pt]
\item $\phi^\triangledown$ s.t.\ $\mathbf{K}\models\phi$ iff for every frame $\mathfrak{F}$, valuation $v_1$ and $w\in\mathfrak{F}$, $v_1(\phi^\triangledown,w)=1$;
\item $\phi^\triangle$ s.t.\ $\mathbf{K}\models\phi$ iff for every frame $\mathfrak{F}$, valuation $v_2$ and $w\!\in\!\mathfrak{F}$, $v_2(\mathbf{1}\!\coimplies\!\phi^\triangle,w)\!=\!0$.
\end{enumerate}
To construct $\phi^\triangledown$, we borrow the idea from~\cite[Theorem~21]{CaicedoMetcalfeRodriguezRogger2017} and put ${\sim\sim}$ in front of every subformula of $\phi$. Since semantics of $\KG$ coincides with $v_1$ conditions in Definition~\ref{def:semantics}, the reduction holds.

For $\phi^\triangle$, we use the following inductive definition
\begin{align*}
p^\triangle&=\triangle p\\
(\chi\circ\psi)^\triangle&=\chi^\triangle\bullet\psi^\triangle\tag{$\circ,\bullet\in\{\wedge,\vee\}$, $\circ\neq\bullet$}\\
(\chi\rightarrow\psi)^\triangle&=\psi^\triangle\coimplies\chi^\triangle\\
(\heartsuit\chi)^\triangle&=\spadesuit(\chi^\triangle)\tag{$\heartsuit\neq\spadesuit$, $\heartsuit,\spadesuit\in\{\Box,\lozenge\}$}
\end{align*}
One can check by induction that for every \emph{crisp} finitely branching $\mathfrak{F}$ and every \emph{classical} valuation $\mathbf{v}$ thereon, it holds that $\mathfrak{F},\mathbf{v},w\vDash\phi$ iff $v_2(\mathbf{1}\coimplies\phi^\triangle,w)=0$ and $\mathfrak{F},\mathbf{v},w\nvDash\phi$ iff $v_2(\mathbf{1}\coimplies\phi^\triangle,w)=1$ provided that $v_2=\mathbf{v}$.

For the converse, let $\mathfrak{M}=\langle W,R^+,R^-,v_1,v_2\rangle$ be a $\fbbirelKGsquare$ model. Let $\mathfrak{M}^!=\langle W,R^!,v^!\rangle$ be s.t.\ $wR^!w'$ iff $wR^-w'=1$ and $w\in v^!(p)$ iff $v_2(p,w)=1$. Again, it is easy to verify that for every $\mathfrak{M}$, $v_2(\phi^\triangle,w)=1$ iff $\mathfrak{M}^!,w\vDash\phi$.

It now follows that
\begin{enumerate}[noitemsep,topsep=2pt]
\item if $\phi$ is \emph{not $\mathbf{K}$-valid}, then there are a $\mathfrak{F}$, $w\!\in\!\mathfrak{F}$, and $v_2$ s.t.\ $v_2(\mathbf{1}\coimplies\phi^\triangle,w)\!\neq\!0$;
\item if there are $\mathfrak{F}$, $w\!\in\!\mathfrak{F}$, and $v_2$ s.t.\ $v_2(\mathbf{1}\!\coimplies\!\phi^\triangle,w)\!\neq\!0$, then $\phi$ is \emph{not $\mathbf{K}$-valid}.
\end{enumerate}
\end{proof}
\section{Conclusion\label{sec:conclusion}}
In this paper, we developed paraconsistent G\"{o}del modal logics interpreted on bi-relational fuzzy and crisp frames. Namely, we proved (Theorems~\ref{theorem:noextension} and~\ref{theorem:crispextension}) that crisp $\birelKGsquare$ extends crisp bi-G\"{o}del modal logic $\KbiG$ while fuzzy $\birelKGsquare$ does not extend fuzzy $\KbiG$. Moreover, we proved that bi-relational counterparts of all $\KbiG$ definable classes of crisp frames are also $\birelKGsquare$ definable (Corollary~\ref{cor:definabilitypreservation}). We also proved that finitely branching crisp and fuzzy frames are definable. For $\birelKGsquare$ over finite frames ($\fbbirelKGsquare$), we constructed a sound and complete constraint tableaux calculus and proved that validity and satisfiability of $\fbbirelKGsquare$ are $\pspace$ complete.

Several questions remain open. First, we wish to axiomatise crisp and fuzzy $\birelKGsquare$. Second, while it is reasonable to conjecture that $\birelKGsquare$ over all frames is also $\pspace$-complete, it is not straightforward to adapt the technique from~\cite{CaicedoMetcalfeRodriguezRogger2017} since $\birelKGsquare$ does not extend $\KbiG$ nor has negation normal forms.

It also seems that ‘traditional’ definitions of classes of fuzzy frames ($\Box p\!\rightarrow\!p$ for reflexive frames, $\Box p\!\rightarrow\!\lozenge p$ for frames s.t.\ $R^+(w),R^-(w)\!\neq\!\varnothing$ for every $w$, etc.) define their $\pm$-counterparts in $\birelKGsquare$. Of course, it is not true in general: $\lozenge\phi$ also defines frames with $R(w)\!\neq\!\varnothing$ for every $w$ whenever $\phi$ is $\KbiG$ valid but $\birelKGsquare$ does not extend $\KbiG$, whence not every $\lozenge\phi$ can be used as such a~definition. Another question is whether it is always possible, given a~$\neg$-free $\phi$ and mono-relational $\mathfrak{F}$ s.t.\ $\mathfrak{F}\!\models_{\KbiG}\!\phi$, to produce a~bi-relational $\mathfrak{F}^\pm$ s.t.~$\mathfrak{F}^\pm\!\models\!\phi$.
\bibliographystyle{splncs04}
\bibliography{reference}
\end{document}